\documentclass[12pt]{amsart}
\usepackage{amsmath}
\usepackage{amsthm}
\usepackage{amscd}
\usepackage[all]{xy} 

\setlength{\oddsidemargin }{-1pt}
\setlength{\evensidemargin }{-1pt}
\setlength{\textwidth}{460pt}

\newtheorem{theo}{Theorem}[section]
\newtheorem{prop}[theo]{Proposition}
\newtheorem{lemm}[theo]{Lemma}
\newtheorem{cor}[theo]{Corollary}

\newtheorem{prob}[theo]{Problem}

\theoremstyle{definition}
\newtheorem{defi}[theo]{Definition}
\theoremstyle{remark}

\theoremstyle{definition}

\newcommand{\ddbar}{dd^c}
\newcommand{\reg}{\mathrm{reg}}
\newcommand{\sing}{\mathrm{sing}}

%%%%%%%%%%%%%%%%%%%%%%%%%%%%%%%%%%%%%%%%%%%%%%%%%%%%%%%%%%%%
\begin{document}

\title[a converse to the 
Andreotti-Grauert theorem]
{Asymptotic cohomology vanishing and\\ a converse to the 
Andreotti-Grauert theorem\\ on surfaces}

%TILE in French:
 
%Annulation de la cohomologie asymptotique et r\'{e}ciproque du th\'or\`eme de Andreotti-Grauert sur les surfaces 

\author{SHIN-ICHI MATSUMURA}

\address{Kagoshima University, 1-21-35 Koorimoto, Kagoshima 890-0065, Japan.}

\email{{\tt
shinichi@sci.kagoshima-u.ac.jp\\
mshinichi0@gmail.com}}

\thanks{Classification AMS 2010: 14C20, 14F17, 32L15. }

\keywords{Asymptotic cohomology groups, 
partial cohomology vanishing, 
$q$-positivity, hermitian metrics, Chern curvatures.}

%KEYWORDS in french: 
%Annulation de la cohomologie asymptotique et r\'{e}ciproque du th\'or\`eme de Andreotti-Grauert sur les surfaces.

\maketitle

%%%%%%%%%%%%%%%%%%%%%%%%%%%%%%%%%%%%%%%%%%%%%%%%%%%%%%%%%%%%%%%%%%%%%%%%%%%%%%%%%%%
\begin{abstract}
In this paper, we study relations between positivity of the curvature 
and the asymptotic behavior of the higher cohomology group 
for tensor powers of a holomorphic line bundle.
The Andreotti-Grauert vanishing theorem asserts that 
partial positivity of 
the curvature 
implies asymptotic vanishing of certain higher cohomology groups. 
We investigate the converse implication of this theorem 
under various situations. 
For example, we consider the case where a line bundle is semi-ample or big. 
Moreover, we show the converse implication holds 
on a projective surface  
without any assumptions on a line bundle.
\end{abstract}

\renewcommand\abstractname{R\'ESUM\'E}
\begin{abstract}
Dans cet article, nous \'{e}tudions les relations entre la positivit\'{e}
de la courbure
et le comportement asymptotique de la cohomologie de degré sup\'{e}rieure
des puissances tensorielles d'un fibr\'{e} en droite holomorphe.
Le th\'{e}or\`{e}me d'annulation d' Andreotti-Grauert affirme que
la positivit\'{e} partielle de
la courbure
implique l'annulation asymptotique de la cohomologie de certains degrés sup\'{e}rieures.
Nous \'{e}tudions la réciproque de ce th\'{e}or\`{e}me
dans plusieurs situations.
Par exemple, nous consid\'{e}rons le cas d'un fibr\'{e} en droite
semi-ample ou gros.
De plus, nous montrons que la r\'{e}ciproque du th\'{e}or\`{e}me d' Andreotti-Grauert est vraie
sur les surfaces projectives
sans aucune hypoth\`{e}se sur le fibr\'{e} en droite.
\end{abstract}

%%%%%%%%%%%%%%%%%%%%%%%%%%%%%%%%%%%%%%%%%%%%%%%%%%%%%%%%%%%%%%%%%%%%%%%%%%%%%%%%%%%

%%%%%%%%%%%%%%%%%%%%%%%%%%%%%%%%%%%%%%%%%%%%%%%%%%%%%%%%%%%%%%%%%%%%%%%%%%%%%%%%%%%
\section{Introduction}
In complex geometry, 
the positivity concept plays an important role.
In particular, a positive line bundle is fundamental and important 
in the theory of several complex variables and algebraic geometry.
For this reason, a positive line bundle 
has been characterized in various ways.
For example, some positive multiple gives an embedding to the projective space (geometric characterization), 
all higher cohomology groups 
of some positive multiple are zero (cohomological characterization), 
and the intersection number with any subvariety is positive (numerical characterization). 
The purpose of this paper is to 
generalize these characterizations to a 
{\textit{$q$-positive}} line bundle. 

Throughout this paper, 
let $X$ be a compact complex manifold of dimension $n$,    
$L$ a (holomorphic) line bundle on $X$ 
and $q$ an integer with $0\leq q \leq n-1$.
Sometimes we may assume that $X$ is K\"ahler or projective.

In this paper, we study  
relations between the $q$-positivity and 
the cohomological $q$-amplitude of a line bundle.
The fundamental relations are discussed in \cite{DPS96}. 
K$\mathrm{\ddot{u}}$ronya and Totaro  
investigated 
the cohomological $q$-amplitude of a line bundle 
in terms of algebraic geometry (see \cite{Kur10}, \cite{Tot10}). 
We consider a $q$-ample line bundle in terms of complex geometry. 
Let us recall the definition of a $q$-positive 
(cohomologically $q$-ample) line bundle. 
 
\begin{defi} 
(1) A holomorphic line bundle $L$ on $X$ is called \textit{$q$-positive}, 
if there exists a (smooth) hermitian metric $h$
whose Chern curvature $\sqrt{-1}\Theta_{h}(L)$ 
has at least $(n-q)$ positive eigenvalues at any point on $X$ 
as a $(1,1)$-form.\vspace{1mm}\\
\ \ \ \ (2) A holomorphic line bundle $L$ on $X$ is called \textit{cohomologically $q$-ample}, 
if for any coherent sheaf $\mathcal{F}$ on $X$ 
there exists a positive integer $m_{0}=m_{0}(\mathcal{F})>0$ such that 
\begin{align*}
H^{i}\big( X, \mathcal{F}\otimes \mathcal{O}_{X}(L^{\otimes m})\big) =0\ \ \ \mathrm{for}\  i>q,\ m\geq m_{0}. 
\end{align*}

\end{defi}
%Our notation differs from that of \cite{Tot10}.
%In \cite{Tot10}, cohomologically $q$-ample 
%is called (naive) $q$-ample.
A $0$-positive line bundle is a positive line bundle in the usual sense. 
Further, it follows from the Serre vanishing theorem that 
a cohomologically $0$-ample line bundle is an ample line bundle 
in the usual sense of algebraic geometry. 
Thanks to the Kodaira embedding theorem, 
we know that a positive line bundle coincides with an ample line bundle. 
We attend to generalize this relation to a $q$-positive 
line bundle.

Andreotti and Grauert proved that 
a $q$-positive line bundle is always $q$-ample.   
(see \cite[Th\'eor\`eme 14]{AG62}, \cite[Proposition 2.1]{DPS96}).
It is of interest to know whether 
the converse implication of the Andreotti-Grauert theorem holds. 
In this paper, we mainly discuss the following problem. 
\begin{prob}$($\cite[Problem 2.2]{DPS96}$)$. \label{main}
Does the converse implication of the Andreotti-Grauert theorem hold ? 
That is to say, is a $q$-ample line bundle always $q$-positive ?
\end{prob}
This problem was first posed by Demailly, Peternell and Schneider in \cite{DPS96}.
Precisely speaking, they consider a uniformly $q$-ample line bundle. 
However, Totaro showed that 
the uniform $q$-amplitude is the same concept 
as the cohomological $q$-amplitude
(see \cite[Theorem 6.2]{Tot10}). 
When $q$ is zero, this problem is affirmative. 
Therefore it is a natural question. 
However it has been an open problem for a long time, 
except the case of $q=0$.

In Section 2, we study this problem when    
$X$ is a smooth projective surface.
The main result of this section is 
an affirmative answer for Problem \ref{main} on a surface (Theorem \ref{AG1}).

\begin{theo}\label{AG1}
On a smooth projective surface $X$, the converse 
of the Andreotti-Grauert theorem holds.
That is, the following conditions are equivalent.\\
\ \ \ \ $\mathrm{(A)}$ $L$ is cohomologically $q$-ample.
\\
\ \ \ \ $\mathrm{(B)}$ $L$ is $q$-positive.
\end{theo} 
For the proof of Theorem \ref{AG1}, 
we establish Theorem \ref{1-posi}. 
Theorem \ref{1-posi} also leads to Corollary \ref{ps-eff}, which 
can be seen as a generalization of  
\cite[Theorem 1]{FO09}. 
The idea of the proof of 
Theorem \ref{1-posi} is to use a solution of   
the global equation (the Monge-Amp\`{e}re equation).

In his paper \cite{Dem10}, Demailly proved the converse of the holomorphic Morse inequality under various situations.  
These results can be seen as a \lq\lq partial" converse of
the Andreotti-Grauert theorem.
The original part of this paper is to give an \lq\lq exact" converse  
(see \cite{Dem10} and Section \ref{hol} of this paper 
for the precise statement).
By combining Theorem \ref{AG1} and the result of \cite{Dem10}, 
the asymptotic behavior of the higher cohomology 
on a surface
can be interpreted in terms of the curvature.

In Section 3, various characterizations of the $q$-positivity of 
a semi-ample line bundle are given on an arbitrary 
compact complex manifold. 
A line bundle $L$ is called {\textit{semi-ample}}, 
if its holomorphic global sections of some positive multiple of $L$ 
have no common zero set.
A semi-ample line bundle induces a holomorphic map to the projective space.
(See \cite{Laz04} for more details on a semi-ample line bundle.)

Theorem \ref{Main} gives a relation between the fibre dimension of a 
holomorphic map and the $q$-positivity. 
When the map is the holomorphic map  
associated to a sufficiently large multiple of a semi-ample line bundle $L$, 
condition $(B)$ in Theorem \ref{Main} is 
equivalent to the cohomological $q$-amplitude of $L$
(see \cite[Proposition 1.7]{So78}).
It leads to the following theorem:

\begin{theo}\label{Cor1}
Let $L$ be a semi-ample line bundle on 
a compact complex manifold $X$.
Then the following conditions $\mathrm(A)$, $\mathrm(B)$ and $\mathrm(C)$ are equivalent.\\
\ \ \ \ $\mathrm(A)$\ \ $L$ is $q$-positive.\\
\ \ \ \ $\mathrm(B)$\ \ The semi-ample fibration of $L$ has fibre dimensions at most $q$.\\
\ \ \ \ $\mathrm(C)$\ \ $L$ is cohomologically $q$-ample.
\\
Further if $X$ is projective, the conditions above 
are equivalent to condition $\mathrm(D)$.\\
\ \ \ \ $\mathrm(D)$\ \ For every subvariety $Z$
with $\dim{Z}>q$, there exists a curve $C$ on $Z$
such that the degree of $L$ on $C$ is positive.
\end{theo}
Condition (B) (resp. (C), (D)) gives a geometric 
(resp. cohomological, numerical) characterization of 
a $q$-positive line bundle.
In particular, the converse of the Andreotti-Grauert theorem holds 
for a semi-ample line bundle on an arbitrary compact complex manifold.

%In condition (D), we use the assumption that $X$ is projective 
%when we construct a curve where the degree of $L$ is positive.
%Note that the equivalence between condition (B) and (C) 
%due to \cite{So78}.
%The original part of this paper is to construct a hermitian 
%metric whose curvature is $q$-positive from condition 
%(B).

In section 4, we consider the Zariski-Fujita type theorem 
(Theorem \ref{non-ample}) 
in order to investigate the $q$-positivity of a big line bundle.
In particular, we know that 
the converse of the Andreotti-Grauert theorem 
for a big line bundle 
is reduced to the case of varieties of smaller dimension (the non-ample locus).

\subsection*{Acknowledgment}
The author wishes to express his deep gratitude
to his supervisor Professor Shigeharu Takayama for various 
comments, encouragement and carefully reading a preliminary 
version of this paper.
He  would like to thank Professor Takeo Ohsawa who taught
him the paper \cite{Ba80} which is related with Theorem \ref{Main}.
Further he is deeply grateful to Professor Hajime Tsuji who 
pointed out some mistakes of a preliminary version of this paper. 
He also would like to express his thanks 
to the referee for many nice advices and carefully reading this paper. 
He is supported by the Grant-in-Aid for Scientific Research (KAKENHI No. 23-7228) and the Grant-in-Aid for JSPS
fellows.

%%%%%%%%%%%%%%%%%%%%%%%%%%%%%%%%%%%%%%%%%%%%%%%%%%%%%%%%%%%%%%%%%%%%%%%%%%%%%%%%%%%
\section{The Monge-Amp\`{e}re equation and $(n-1)$-positivity}
This section is devoted to prove Theorem \ref{1-posi} and its corollaries. 
Throughout this section, 
let $L$ be a line bundle on a compact 
K\"ahler manifold $X$ and $\omega $ a K\"ahler form on $X$. 
First we give the proof of Theorem \ref{1-posi}.
\begin{theo}\label{1-posi}
Let $L$ be a line bundle on a compact 
K\"ahler manifold $X$ and 
$\omega$ a K\"ahler form on $X$.
Assume that the intersection number
$\big( L\cdot \{\omega \}^{n-1}\big)$ is positive.
Here $\{\omega \}$ denotes the cohomology class in $H^{1,1}(X, \mathbb{R})$
which is defined by $\omega$.

Then $L$ is $(n-1)$-positive.
That is, there exists a smooth hermitian metric $h$
whose Chern curvature $\sqrt{-1}\Theta_{h}(L)$ 
has at least $1$ positive eigenvalue at every point on $X$.
\end{theo}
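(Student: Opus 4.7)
Pick any smooth hermitian metric $h_0$ on $L$ with Chern curvature $\alpha := \sqrt{-1}\Theta_{h_0}(L)$, so that every competing metric takes the form $h=h_0 e^{-\varphi}$ with curvature $\beta := \alpha + \ddbar\varphi$. The plan is to choose $\varphi$ so that $\beta\wedge\omega^{n-1}$ is a strictly positive $(n,n)$-form at every point of $X$. This suffices for $(n-1)$-positivity, because at any point, in $\omega$-orthonormal coordinates simultaneously diagonalizing $\beta$ with eigenvalues $\lambda_1,\dots,\lambda_n$, one has
\begin{equation*}
\beta\wedge\omega^{n-1}\,=\,\frac{\lambda_1+\cdots+\lambda_n}{n}\,\omega^n,
\end{equation*}
so the pointwise positivity of $\beta\wedge\omega^{n-1}$ forces $\sum_i\lambda_i>0$ and hence at least one $\lambda_i$ to be strictly positive, which is exactly the definition of $(n-1)$-positivity.

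To produce such a $\varphi$, set
\begin{equation*}
c\,:=\,\frac{\bigl(L\cdot\{\omega\}^{n-1}\bigr)}{\int_X \omega^n}\,>\,0,
\end{equation*}
which is strictly positive by hypothesis, and consider the equation
\begin{equation*}
\ddbar\varphi\wedge\omega^{n-1}\,=\,c\,\omega^n-\alpha\wedge\omega^{n-1}.
\end{equation*}
Its right-hand side is a smooth real $(n,n)$-form of total mass zero by the choice of $c$, and the operator $\varphi\mapsto\ddbar\varphi\wedge\omega^{n-1}$ is, up to a positive constant and multiplication by $\omega^n$, nothing but the K\"ahler Laplacian of $\omega$ acting on functions. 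Standard Hodge theory on the compact K\"ahler manifold $(X,\omega)$ therefore supplies a smooth real solution $\varphi$, unique up to an additive constant. The metric $h:=h_0 e^{-\varphi}$ has curvature $\beta$ satisfying $\beta\wedge\omega^{n-1}=c\,\omega^n>0$ everywhere, completing the proof.

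The only genuine subtlety is the observation that the scalar condition $\beta\wedge\omega^{n-1}>0$ is exactly what yields a positive eigenvalue at every point; once this is in place, the PDE side reduces to a Laplace equation whose solvability is governed precisely by the cohomological hypothesis $(L\cdot\{\omega\}^{n-1})>0$. I expect the author's reference to ``Monge--Amp\`ere equations'' to mean this linear first step of the cascade $\beta^k\wedge\omega^{n-k}$ (which is how the complex Monge--Amp\`ere operator degenerates at $k=1$), rather than a genuinely nonlinear Yau-type argument; the main obstacle in writing up the proof is simply to present the trace--eigenvalue dictionary cleanly.
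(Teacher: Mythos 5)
Your argument is correct, but it takes a genuinely different route from the paper's. The paper invokes Yau's theorem: for $k\gg 0$ it solves the nonlinear Monge--Amp\`ere equation $\big(\sqrt{-1}\Theta_{h}(L)+k\omega+\ddbar\varphi\big)^{n}=D_{k}(k\omega)^{n}$ with the constant $D_{k}>1$ (this is where $(L\cdot\{\omega\}^{n-1})>0$ enters), so that the eigenvalues $\lambda_{1}\geq\dots\geq\lambda_{n}$ of the resulting K\"ahler form with respect to $k\omega$ are all positive with product $D_{k}>1$; hence $\lambda_{1}>1$ everywhere, and since the eigenvalues of $\sqrt{-1}\Theta_{h}(L)+\ddbar\varphi$ are $\lambda_{i}-1$, one of them is positive. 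You control the \emph{sum} of the eigenvalues instead of their \emph{product}: you prescribe the trace by the linear Poisson equation $\ddbar\varphi\wedge\omega^{n-1}=c\,\omega^{n}-\alpha\wedge\omega^{n-1}$, which is solvable by Hodge theory exactly because $c$ is chosen so that the right-hand side has zero total integral, and the hypothesis enters only through $c>0$. Your trace identity $\beta\wedge\omega^{n-1}=\frac{1}{n}\big(\sum_{i}\lambda_{i}\big)\omega^{n}$ and the implication $\sum_{i}\lambda_{i}>0\Rightarrow\max_{i}\lambda_{i}>0$ are both correct, so the proof is complete. What each approach buys: yours is more elementary (linear elliptic theory rather than the Calabi--Yau theorem) and isolates cleanly where the numerical hypothesis is used; the paper's nonlinear route additionally keeps \emph{every} eigenvalue of the auxiliary form $\sqrt{-1}\Theta_{h}(L)+k\omega+\ddbar\varphi$ positive, which is not needed for this particular statement but matches the Monge--Amp\`ere framework used elsewhere in this circle of ideas. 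One small factual point: your closing guess is off --- the author's reference to Monge--Amp\`ere equations really does mean the genuinely nonlinear Yau-type argument, not the linearized trace equation --- but this has no bearing on the validity of your proof.
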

\begin{proof}
The main idea of the proof is to use a solution of the Monge-Amp\`{e}re equation.
In order to solve the Monge-Amp\`{e}re equation, 
we make use of the following Calabi-Yau type theorem.
It is a deep result which was proved as a special case in \cite{Yau78}.
Roughly speaking, it says that 
the product of the eigenvalues of a K\"ahler form
(which represents a given K\"ahler class )
can be controlled.

\begin{theo}$($\cite{Yau78}$)$. \label{Yau78}
Let $M$ be a compact  K\"ahler manifold of dimension $n$ and 
$\widetilde \omega $ a  K\"ahler form on $M$.
For a positive smooth $(n,n)$-form $F >0$ with $\int_{M}F= \int_{M} 
\widetilde{\omega}^{n}$, 
there exists a function $\varphi \in C^{\infty}(M, \mathbb{R})$  
with the following properties $:$\\
\ \ \ \ $\mathrm{(1)}$\ \ \ 
$(\widetilde \omega +\ddbar \varphi)^{n} = F$ at every point on $M$\\
\ \ \ \ $\mathrm{(2)}$\ \ \ 
$(\widetilde \omega +\ddbar \varphi)$ is a 
K\"ahler form on $M$.
\end{theo}
Fix a smooth hermitian metric $h$ of $L$.
Then the Chern curvature $\sqrt{-1}\Theta_{h}(L)$
represents the first Chern class of $L$.
We want to construct 
a real-valued smooth function $\varphi $ on $X$ 
such that $\sqrt{-1}\Theta_{h}(L) + \ddbar \varphi $ is 
$(n-1)$-positive (that is, the $(1,1)$-form has at least 
$1$ positive eigenvalue everywhere).
If we obtain a function $\varphi$ with the condition above, 
we can easily see that $L$ is $(n-1)$-positive. 
In fact, the Chern curvature associated to the metric defined by 
$he^{-2\varphi}$ is equal to $\sqrt{-1}\Theta_{h}(L) + \ddbar \varphi$.
Therefore it is sufficient for the proof to construct a 
function $\varphi$ with the condition above.
To construct such function, we use Theorem \ref{Yau78}. 

Since $\omega$ is a positive form, 
the $(1,1)$-form $\sqrt{-1}\Theta_{h}(L) + k \omega $ is 
a K\"ahler form on $X$ for a sufficiently large constant $k>0$.
Now we consider the following Monge-Amp\`{e}re equation:
\begin{align*}
\big( \sqrt{-1}\Theta_{h}(L) + k \omega + \ddbar \varphi \big)^{n}
&= D_{k} \big( k \omega \big)^{n} , \\
\big( \sqrt{-1}\Theta_{h}(L) + k \omega + \ddbar \varphi \big)&>0 .
\end{align*}
Here $D_{k}$ is a positive constant which depends on $k$.
In order to solve this equation, 
we need to define $D_{k}$ by
\begin{align*}
D_{k}:=\dfrac{\int_{X} \big( \sqrt{-1}\Theta_{h}(L) + k \omega \big)^{n}}
{\int_{X} \big( k \omega \big)^{n} }.
\end{align*}
When $D_{k}$ is defined as above, 
we can take a solution of the equation, thanks to Theorem \ref{Yau78}.
In fact, by applying Theorem \ref{Yau78} to
a K\"ahler form $$\widetilde \omega :=\big( \sqrt{-1}\Theta_{h}(L)+ k \omega \big)$$ and 
a smooth $(n,n)$-form $F:=D_{k} \big( k \omega \big)^{n}$, 
we can obtain a solution.
Note that the equality $\int_{X} \widetilde \omega ^{n}= \int_{X}  F $ 
holds by the definition of $D_{k}$.

Now we shall show that 
the constant $D_{k}$ is greater than $1$ for a sufficiently large $k>0$. 
We use the assumption of the theorem only for this argument.
By the definition of $D_{k}$ we have
\begin{align*}
D_{k}&=\dfrac{\int_{X} \big( \sqrt{-1}\Theta_{h}(L) + k \omega \big)^{n}}
{\int_{X} \big( k \omega \big)^{n} }\\
&=\dfrac{\big( L+k\{\omega \} \big)^{n}}
{ k^{n} (\{\omega \}^{n})} \\
&= \dfrac{\big( L^{n} \big) + k 
n \big( L^{n-1}\cdot \{\omega \} \big)
+\dots + k^{n-1} n \big( L \cdot \{\omega \}^{n-1} \big) }
{k^{n} (\{\omega \}^{n})} +1.
\end{align*}

The molecule in the right hand is a polynomial of degree 
$(n-1)$ with respect to $k$. 
Further, the coefficient of the highest degree term is equal to  
$n(L\cdot \{\omega \}^{n-1})$. 
It is positive by the assumption.  
Therefore the first term in the right hand 
is greater than $0$ for a sufficiently large $k>0$.
Hence $D_{k}$ is greater than $1$.

Finally we show that $\sqrt{-1}\Theta_{h}(L) + \ddbar \varphi$ has 
at least $1$ positive eigenvalue at every point on $X$. 
Here $\lambda _{1}(x) \geq \dots \geq \lambda _{n}(x)$ denote 
the eigenvalues of 
$$\big(  \sqrt{-1}\Theta_{h}(L) + k \omega + \ddbar \varphi \big) $$ 
with respect to $k \omega$ at $x \in X$.
Then the function $\lambda _{i}$ for $i=1,2, \dots, n$ is well-defined as a function on $X$.
Further $\lambda _{i}$ for $i=1,2, \dots, n$ is continuous, 
since the $j$-th symmetric function in 
$\lambda _{1}, \dots, \lambda_{n}$ is smooth.
Since $\varphi $ is a solution of the Monge-Amp\`{e}re equation, 
the functions $\lambda _{i}$ satisfy
the following equality everywhere: 
\begin{align*}
\lambda _{1}(x) \lambda _{2}(x) \cdots \lambda _{n}(x) &=D_{k}>1,\ \ \ \ 
{\rm {at\ every point }}\ x\in X.
\end{align*}
In addition, $\lambda _{n}(x)$ is positive for any point $x \in X$
since $$\big(  \sqrt{-1}\Theta_{h}(L) + k \omega + \ddbar \varphi \big) $$ is 
a K\"ahler $(1,1)$-from.
Thus we know $\lambda _{1}(x)>1$ at every point on $X$
since $D_{k}$ is greater than $1$.

Now  
the eigenvalues of 
\begin{equation*}
 \sqrt{-1}\Theta_{h}(L) + \ddbar \varphi = ( \sqrt{-1} \Theta_{h}(L) + k \omega + \ddbar \varphi ) - k \omega
\end{equation*}
are $(\lambda _{1}-1), \dots, (\lambda _{n}-1)$ 
since all eigenvalues of $k \omega $ are $1$. 
Thus $\big(\sqrt{-1} \Theta_{h}(L) + \ddbar \varphi \big)$ has 
$1$ positive
eigenvalue $(\lambda _{1}-1)$ everywhere. 
It completes the proof.  
\end{proof}

On the rest of this section, we give the proofs of 
Theorem \ref{AG} and Corollary \ref{ps-eff}.

\begin{theo}{\rm{(=Theorem \ref{AG1}).}}\label{AG}
On a smooth projective surface $X$, the converse 
of the Andreotti-Grauert theorem holds.
That is, the following conditions are equivalent.\\
\ \ \ \ $\mathrm{(A)}$ $L$ is cohomologically $q$-ample.
\\
\ \ \ \ $\mathrm{(B)}$ $L$ is $q$-positive.
\end{theo} 
In the statement of Theorem \ref{AG}, it follows  
that condition (B) leads to condition (A) 
from the Andreotti-Grauert theorem.
The converse is an open problem and a main subject in this paper.
Theorem \ref{AG} claims Problem \ref{main} 
is affirmatively solved  
on a smooth projective surface.

For the proof of Theorem \ref{AG}, 
we shall prepare Lemma \ref{duality} and Lemma \ref{dual cone}. 
These lemmata may be known facts.
However we give proofs for readers' convenience.
Lemma \ref{duality} can be proved even if $X$ has singularities. 
However for our purpose, 
we need only the case where $X$ is smooth.

\begin{lemm}\label{duality}
Let $L$ be a line bundle on a smooth projective variety $X$.
Then the following conditions are equivalent.
\\
\ \ \ \ $\mathrm{(1)}$ The dual line bundle $L^{\otimes -1} $ 
is not pseudo-effective.
\\
\ \ \ \ $\mathrm{(2)}$ $L$ is cohomologically $(n-1)$-ample.
\end{lemm}
\begin{proof}
This theorem follows from the Serre duality theorem.

First we confirm that condition (2) implies (1).
For a contradiction, we assume that $L^{\otimes -1}$ is psuedo-effective.
Then we can take an ample line bundle $A$ on $X$ such that 
$A\otimes L^{\otimes -m}$ has a non-zero section 
for every positive integer $m>0$. 
Note that $A$ does not depend on $m$. 
For the coherent sheaf $\mathcal{O}_{X}(K_{X}\otimes A^{\otimes -1})$, 
we can take a positive integer $m$ such that 
$$h^{n}\big( X,\mathcal{O}_{X}(
K_{X}\otimes A^{\otimes -1}\otimes L^{\otimes m}) \big)=0$$ 
from condition (2).
Here $K_{X}$ denotes the canonical bundle on $X$.
It follows from the Serre duality theorem that 
$h^{0}\big( X,\mathcal{O}_{X}(
 A\otimes L^{\otimes -m}) \big) =0$ .
This is a contradiction to the choice of $A$.

Conversely we show that condition (1) implies (2). 
Fix an ample line bundle $B$ on $X$. 
For a given coherent sheaf $\mathcal{F}$ on $X$, 
we can take the following resolution of $\mathcal{F}$
by taking a large integer $k>0$:
\begin{equation*}
0\longrightarrow \mathcal{G} \longrightarrow \oplus_{i=1}^{N} \mathcal{O}_{X}(B^{\otimes -k})
\longrightarrow \mathcal{F} \longrightarrow 0.
\end{equation*}
In fact, 
$\mathcal{F}\otimes \mathcal{O}_{X}( B^{\otimes k})$ is globally generated 
for a sufficient large $k$ 
since $B$ is ample. 
Therefore we obtain a surjective map $\oplus_{i=1}^{N} 
\mathcal{O}_{X}(B^{\otimes -k})
\longrightarrow \mathcal{F} $
as a sheaf morphism.
We define $\mathcal{G}$ to be the kernel of its map.

Thus it is sufficient to show that 
there is a positive integer $m_{0}$ such that 
$$h^{n}\big( X, \mathcal{O}_{X}(
B^{\otimes -k}\otimes L^{\otimes m}) \big)=0\ \ {\rm{for}}\ m\geq m_{0}$$
In fact,  
the long exact sequence yields
$h^{n}\big( X, \mathcal{F} \otimes \mathcal{O}_{X}( L^{\otimes m})
 \big)=0$ for $m\geq m_{0}$.
It means that $L$ is cohomologically $(n-1)$-ample.

Since $L^{\otimes -1}$ is not psuedo-effective, 
we can take   
a sufficiently large integer $m_{0}$ such that 
$K_{X}^{\otimes -1}\otimes B^{\otimes k}\otimes L^{\otimes -m}$ is not 
psuedo-effective for $m \geq m_{0}$. 
Since this line bundle is not psuedo-effective, we have  
 $$h^{0}\big( X, \mathcal{O}_{X}(
K_{X}^{\otimes -1}\otimes B^{\otimes k} \otimes L^{\otimes -m}) \big)=0.$$
Again by using the Serre duality theorem, 
we have $$h^{n}(X,\mathcal{O}_{X}( 
B^{\otimes -k}\otimes L^{\otimes m}) )=0$$ for $m \geq m_{0}$.

\end{proof}

\begin{lemm}\label{dual cone}
Let $L$ be a line bundle on a smooth projective variety $X$ 
of dimension $n$.
Then the following conditions are equivalent. \\
\hspace{0.7cm}$\mathrm{(2)}$ $L$ is cohomologically $(n-1)$-ample. \\
\hspace{0.7cm}$\mathrm{(3)}$ There exists a strongly movable curve $C$ on $X$ 
such that the degree of $L$ on $C$ is positive.
\end{lemm}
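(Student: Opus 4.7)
The strategy is to combine Lemma \ref{duality} with the Boucksom--Demailly--Paun--Peternell duality between the pseudo-effective cone of divisors and the cone of strongly movable curves. By Lemma \ref{duality}, condition (1) is equivalent to saying that $L^{\otimes -1}$ is not pseudo-effective, so the task reduces to proving the equivalence
\begin{equation*}
L^{\otimes -1}\ \text{is not pseudo-effective} \iff \exists\ \text{strongly movable curve}\ C\ \text{with}\ L\cdot C>0.
\end{equation*}

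For the implication $(\Leftarrow)$, which is the easier direction, I would argue directly: if $L^{\otimes -1}$ were pseudo-effective, then by the definition of a strongly movable curve $C$ (as a limit, up to positive multiples, of classes of the form $\mu_{*}(\tilde A_{1}\cap\dots\cap\tilde A_{n-1})$ for ample classes $\tilde A_{i}$ on a birational modification $\mu\colon \tilde X\to X$), the intersection number $L^{\otimes -1}\cdot C$ would be a limit of non-negative quantities, hence $\geq 0$. This would force $L\cdot C\leq 0$, contradicting the hypothesis.

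For the implication $(\Rightarrow)$, the key input is the BDPP duality theorem, which asserts that on a smooth projective variety the closed cone of pseudo-effective divisor classes in $N^{1}(X)_{\mathbb{R}}$ is dual (under the intersection pairing) to the closed cone $\overline{\mathrm{SM}}(X)\subset N_{1}(X)_{\mathbb{R}}$ generated by strongly movable curve classes. If $L^{\otimes -1}$ is not pseudo-effective, then by this duality there exists a class $\gamma\in \overline{\mathrm{SM}}(X)$ with $L^{\otimes -1}\cdot \gamma<0$, i.e.\ $L\cdot \gamma>0$. Since the set of classes of actual strongly movable curves spans a cone whose closure is $\overline{\mathrm{SM}}(X)$, and the condition $L\cdot (\cdot)>0$ is open, we can perturb $\gamma$ to obtain the class of an honest strongly movable curve $C$ still satisfying $L\cdot C>0$.

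The main obstacle is really the invocation of the BDPP duality theorem, which is a deep result; the rest of the argument is a standard cone-duality manipulation together with the openness of the strict positivity condition. In the write-up I would simply cite BDPP for this duality and carry out the short deductions above.
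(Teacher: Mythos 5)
Your proposal is correct and follows essentially the same route as the paper: reduce via Lemma \ref{duality} to the non-pseudo-effectivity of $L^{\otimes -1}$ and then invoke the BDPP duality between the pseudo-effective cone and the closed cone of strongly movable curves. The only difference is that you spell out the perturbation step needed to pass from a class in the closed cone to an honest strongly movable curve (using openness of the condition $L\cdot(\cdot)>0$), a point the paper's proof leaves implicit.
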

Here a curve $C$ is called a {\textit{strongly movable}} curve if 
\begin{equation*}
C=\mu _{*} \big( A_{1} \cap \dots \cap A_{n-1} \big)
\end{equation*}
for suitable very ample divisors $A_{i}$ on $\widetilde X$, 
where $\mu :\widetilde X \to X$ is a birational morphism. 
See \cite[Definition 1.3]{BDPP} for more details.

\begin{proof}
The deep result proved in \cite{BDPP} yields Lemma \ref{dual cone}.
It follows from \cite[Theorem2.2]{BDPP} that 
the cone of pseudo-effective line bundles is the dual cone of 
strongly movable curves.
That is, a line bundle is pseudo-effective if and only if 
the degree of the line bundle on every strongly movable curve is semi-positive.
From Lemma \ref{duality}, 
$L$ is cohomological $(n-1)$-ample 
if and only if $L^{\otimes -1}$ is not psuedo-effective.
Therefore then there exists a strongly movable curve $C$ such that
\begin{align*}
(L^{ \otimes -1}\cdot C)<0.
\end{align*}
It completes the proof.
\end{proof}

By applying Lemma \ref{dual cone} and Theorem \ref{1-posi}, 
we shall complete the proof of Theorem \ref{AG}. \vspace{0.3cm}\\
\textit{Proof of Theorem \ref{AG}.}\ \ 
When $X$ is a projective surface, 
the closure of the cone of strongly movable curves coincides with 
the closure of the cone of ample line bundles
(that is, the nef cone).
Indeed, the dual cone of pseudo-effective line bundles equals to
the closure of the cone of ample line bundles.
Therefore, when $L$ is cohomologically $1$-ample, 
there exists an ample line bundle $H$ with $(L\cdot H)>0$ 
by Lemma \ref{dual cone}.

Since $H$ is ample, the first Chern class of $H$ contains  
a K\"ahler form $\omega $.
Since the intersection number $(L\cdot H)$ equals to 
$(L \cdot \{ \omega \})$, the line bundle $L$ satisfies 
the assumption in Theorem \ref{1-posi}.
Therefore if follows that $L$ is $1$-positive from Theorem \ref{1-posi}.
\begin{flushright}
$\square$
\end{flushright}

At the end of this section, 
we prove Corollary \ref{ps-eff}, which can be seen as a generalization of  
\cite[Theorem 1]{FO09} to a psuedo-effective line bundle.
In \cite{FO09}, in order to show 
that an effective line bundle is $(n-1)$-positive, 
Fuse and Ohsawa apply $n$-convexity of a non-compact complex manifold.
We make use of the Monge-Amp\`{e}re equation instead of 
$n$-convexity of a non-compact complex manifold.

\begin{cor} \label{ps-eff}
Let $L$ be a pseudo-effective line bundle on a compact 
K\"ahler manifold $X$.
Assume that the first Chern class of $L$ is not trivial.

Then $L$ is $(n-1)$-positive.
\end{cor}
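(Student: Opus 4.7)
The plan is to deduce Corollary \ref{ps-eff} from Theorem \ref{1-posi}. Fix any K\"ahler form $\omega$ on $X$; by Theorem \ref{1-posi}, it suffices to show
\begin{equation*}
(L\cdot\{\omega\}^{n-1})>0,
\end{equation*}
so the entire task is to produce this one numerical inequality.

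Since $L$ is pseudo-effective, the first Chern class $c_{1}(L)$ is represented by a closed positive $(1,1)$-current $T$ on $X$. The wedge product $T\wedge\omega^{n-1}$ is then a well-defined positive Radon measure on $X$, and because $\omega^{n-1}$ is a smooth closed $(n-1,n-1)$-form, Stokes' theorem identifies its total mass with the intersection number:
\begin{equation*}
(L\cdot\{\omega\}^{n-1})=\int_{X}T\wedge\omega^{n-1}\geq 0.
\end{equation*}
Thus everything reduces to ruling out equality under the assumption $c_{1}(L)\neq 0$.

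Suppose for contradiction that $\int_{X}T\wedge\omega^{n-1}=0$. Since $T\wedge\omega^{n-1}$ is a positive measure of zero total mass, it vanishes identically on $X$. Working in a local chart, one writes $T=\ddbar u$ for a plurisubharmonic potential $u$ (a classical property of closed positive $(1,1)$-currents), and then $T\wedge\omega^{n-1}$ is a positive multiple of $(\mathrm{tr}_{\omega}\ddbar u)\,\omega^{n}$. The vanishing of this measure forces $\mathrm{tr}_{\omega}\ddbar u=0$ distributionally, so $u$ is $\omega$-harmonic and in particular smooth; the complex Hessian $\ddbar u$ is then a pointwise positive semi-definite Hermitian form whose trace against $\omega$ is zero, so it must vanish identically. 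Consequently $T=0$ on that chart, and patching over a cover gives $T\equiv 0$ globally, contradicting $c_{1}(L)=\{T\}\neq 0$. The main obstacle of the argument is really this local regularity step that forces a nonzero closed positive $(1,1)$-current to have strictly positive trace mass against a K\"ahler form; once it is in hand, $(L\cdot\{\omega\}^{n-1})>0$ follows, and Theorem \ref{1-posi} immediately yields the $(n-1)$-positivity of $L$.
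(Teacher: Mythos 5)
Your proposal is correct and follows the paper's overall strategy exactly: reduce to Theorem \ref{1-posi} by proving $(L\cdot\{\omega\}^{n-1})>0$, note that positivity of a representing current $T$ gives $\ge 0$, and rule out equality by showing it would force $T\equiv 0$, contradicting $c_{1}(L)\neq 0$. The one place you diverge is in that last step. The paper argues softly: for any smooth $(n-1,n-1)$-form $\gamma$ one has $-C\omega^{n-1}\le\gamma\le C\omega^{n-1}$ by compactness, so positivity of $T$ sandwiches $\int_X T\wedge\gamma$ between $\pm C\int_X T\wedge\omega^{n-1}=0$; since $(n-1,n-1)$-forms are test forms for $(1,1)$-currents, $T=0$ immediately. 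You instead pass to local psh potentials $T=\ddbar u$, read the vanishing of the trace measure as $\mathrm{tr}_{\omega}\ddbar u=0$ distributionally, invoke elliptic regularity (Weyl's lemma) to make $u$ smooth, and conclude pointwise that a semi-positive Hermitian form with zero trace vanishes. Both arguments are valid; yours buys nothing extra here and uses heavier machinery (local $\ddbar$-potentials and elliptic regularity) where a one-line duality estimate suffices, but it does make the pointwise mechanism of the vanishing more transparent. Everything else --- the identification of $\int_X T\wedge\omega^{n-1}$ with the intersection number and the final appeal to Theorem \ref{1-posi} --- matches the paper.
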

A line bundle is called {\textit{pseudo-effective}} if 
there exists a singular hermitian metric $h$
whose curvature current $\sqrt{-1}\Theta_{h}(L)$ 
is positive on $X$ as a $(1,1)$-current.
A pseudo-effective line bundle (which is not numerically trivial) 
is cohomologically $(n-1)$-ample (see Lemma \ref{duality}). 
Therefore a pseudo-effective line bundle should 
be $(n-1)$-positive if the converse of 
the Andreotti-Grauert theorem holds.
Corollary \ref{ps-eff} claims that it is affirmative 
on a compact K\"ahler manifold.
\begin{proof}
Under the assumption of Corollary \ref{ps-eff}, 
we show that the line bundle 
$L$ satisfies the assumption in Theorem \ref{1-posi}.

Fix a K\"ahler form $\omega$ on $X$ and  
take an arbitrary smooth $(n-1,n-1)$-form $\gamma $ on $X$.
The $(n-1,n-1)$-form $\omega ^{n-1}$ is strictly positive.
Therefore there exists a large constant $C>0$ such that 
$$-C  \omega ^{n-1} \leq \gamma \leq C  \omega ^{n-1}.$$
Here we used the compactness of $X$. 
Since $L$ is psuedo-effective, 
there exists a singular hermitian metric such that 
$\sqrt{-1}\Theta_{h}(L)$ is a positive current.
Since $\sqrt{-1}\Theta_{h}(L)$ is a positive current, 
the following inequalities hold:
\begin{align*}
-C \int_{X} \sqrt{-1}\Theta_{h}(L) \wedge \omega ^{n-1}
\leq \int_{X} \sqrt{-1}\Theta_{h}(L) \wedge \gamma 
\leq C \int_{X} \sqrt{-1}\Theta_{h}(L)  \wedge \omega ^{n-1}.
\end{align*} 
The positive current 
$\sqrt{-1}\Theta_{h}(L)$ represents the first Chern class  
of $L$.
Thus the integral $\int_{X} \sqrt{-1}\Theta_{h}(L) \wedge \omega ^{n-1}$ agrees with 
the intersection number $
\big( L \cdot \{\omega \}^{n-1} \big)$.
If the intersection number is zero, it follows from the inequality above that 
$\int_{X} \sqrt{-1}\Theta_{h}(L)  \wedge \gamma$ is zero
for any smooth $(n-1,n-1)$-form $\gamma $.
It means that $ \sqrt{-1}\Theta_{h}(L) $ is a zero current.
This is a contradiction to the assumption that 
the first Chern class of $L$ is not trivial.
Hence the intersection number $\big( L \cdot \{\omega \}^{n-1} \big)$ must 
be positive.
It follows that $L$ is $(n-1)$-positive from Theorem \ref{1-posi}.
\end{proof}

\section{The fiber dimension and $q$-positivity}
The main purpose of this section is to give the proof of Theorem \ref{Cor1}. 
For this purpose, we first consider Theorem \ref{Main}. 

\begin{theo}\label{Main}
Let $\Phi : X \longrightarrow Y$ be a holomorphic map $($possibly not surjective$)$ 
from $X$ to a compact complex manifold $Y$.
Then the following conditions are equivalent. 
\vspace{0.2cm}\\
\ \ \ \ $\mathrm{(A)}$\ \ Fix a Hermitian form $\omega $ $($that is, a positive definite $(1,1)$-form$)$ on $Y$.
Then there exists a  
function $\varphi \in C^{\infty}(X, \mathbb{R})$ 
such that the $(1,1)$-form $\Phi ^{*}\omega + \ddbar \varphi $ is $q$-positive 
$\mathrm{(}$that is, the form has at least $(n-q)$ positive eigenvalues at any point on $X$ as a $(1,1)$-form$\mathrm{)}$.
\vspace{0.1cm}\\
\ \ \ \ $\mathrm{(B)}$\ \ The map $\Phi $ has fibre dimensions at most $q$.
\end{theo}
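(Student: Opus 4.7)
The plan is to prove the two directions separately: $\mathrm{(B)} \Rightarrow \mathrm{(A)}$ by a local-to-global construction near each fibre, and the converse under projectivity by a restriction-and-maximum-principle argument on a desingularized fibre of large dimension.

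For $\mathrm{(B)} \Rightarrow \mathrm{(A)}$, I would cover the compact $Y$ by finitely many coordinate charts $V_1, \ldots, V_N$ with local coordinates $w^\alpha = (w^\alpha_1, \ldots, w^\alpha_m)$, set $U_\alpha := \Phi^{-1}(V_\alpha)$ and $f^\alpha_i := \Phi^* w^\alpha_i$, and fix a partition of unity $\{\chi_\alpha\}$ on $Y$ subordinate to $\{V_\alpha\}$. Because $\dim \Phi^{-1}(y) \leq q$, on each $U_\alpha$ the joint zero locus of the $f^\alpha_i$ has pure dimension $\leq q$; by a generic-position argument one extracts $n - q$ $\mathbb{C}$-linear combinations $g^\alpha_1, \ldots, g^\alpha_{n-q}$ of the $f^\alpha_i$ whose common zero locus has pure dimension $q$ near every fibre in $V_\alpha$. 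Following the method of \cite{Ba80}, I build a smooth local potential $\rho_\alpha$ on $U_\alpha$ from the $g^\alpha_j$ (with corrective terms at points where $\operatorname{rank} d\Phi$ drops) so that $\Phi^*\omega + dd^c \rho_\alpha$ has at least $n - q$ positive eigenvalues on $U_\alpha$. Gluing: set $\varphi := \sum_\alpha (\Phi^* \chi_\alpha)\, N \rho_\alpha$ for a large $N$, so that the dominant contributions $N (\Phi^* \chi_\alpha)\, dd^c \rho_\alpha$ overwhelm the cross-terms $d(\Phi^*\chi_\alpha) \wedge d^c \rho_\alpha + \rho_\alpha \, dd^c(\Phi^*\chi_\alpha)$ uniformly on $X$.

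For $\mathrm{(A)} \Rightarrow \mathrm{(B)}$ with $X$ projective, I argue by contradiction. Suppose some irreducible fibre component $F_0 \subseteq \Phi^{-1}(y_0)$ has $r := \dim F_0 > q$, and set $\alpha := \Phi^*\omega + dd^c\varphi$. Since $\Phi$ is constant on $F_0$, $\Phi^*\omega$ vanishes on $F_0^{\mathrm{reg}}$, so $\alpha|_{F_0^{\mathrm{reg}}} = dd^c(\varphi|_{F_0})$. By Cauchy interlacing this restriction has $\geq (n-q) - (n-r) = r - q \geq 1$ positive eigenvalues at each smooth point. Take a desingularization $\mu : \tilde F_0 \to F_0$ with exceptional divisor $E$ (available as $X$ is projective); set $\tilde\varphi := \mu^*(\varphi|_{F_0})$, so $dd^c\tilde\varphi = \mu^*\alpha|_{F_0}$ has $\geq r - q$ positive eigenvalues on $\tilde F_0 \setminus E$. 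By Bertini, cut $\tilde F_0$ down to a smooth projective subvariety $\tilde Z \subset \tilde F_0$ of dimension $q + 1$ via $r - q - 1$ general hyperplane sections of a very ample system, and interlacing again gives $dd^c(\tilde\varphi|_{\tilde Z})$ at least one positive eigenvalue on $\tilde Z \setminus E$. Apply the maximum principle: after perturbing $\tilde\varphi|_{\tilde Z}$ by $\epsilon \log\|s\|^2$ (with $s$ a section of $\mathcal{O}(E)|_{\tilde Z}$) to push its maximum into $\tilde Z \setminus E$, a limit argument as $\epsilon \to 0$ produces a point of $\tilde Z \setminus E$ where $dd^c(\tilde\varphi|_{\tilde Z}) \leq 0$, contradicting the existence of a positive eigenvalue there.

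The main obstacle is the local construction of $\rho_\alpha$ in the first direction at points $x \in U_\alpha$ where $\operatorname{rank} d\Phi_x < n - q$. The naive choice $\rho_\alpha := \sum_j |g^\alpha_j|^2$ has $dd^c \rho_\alpha$ of rank equal to $\operatorname{rank} d\Phi_x$, which is too small; one must exploit the fact that $\{g^\alpha_1 = \cdots = g^\alpha_{n-q} = 0\}$ has dimension strictly less than $\dim \ker d\Phi_x$, i.e., use higher-order information about the map $\Phi$ beyond its first-order differential, to produce the extra $n - q - \operatorname{rank} d\Phi_x$ positive eigenvalues. This is the novel content the paper claims for itself.
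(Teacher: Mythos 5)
Your proposal has a genuine gap in the direction $\mathrm{(B)}\Rightarrow\mathrm{(A)}$, and you have in effect conceded it yourself: the final paragraph identifies the construction of $\rho_\alpha$ at points where $\operatorname{rank}d\Phi$ drops below $n-q$ as ``the novel content the paper claims for itself'' and offers no mechanism for producing the missing $n-q-\operatorname{rank}d\Phi_x$ positive eigenvalues. That is precisely the entire difficulty of the theorem, so what remains is a setup rather than a proof. Moreover the chart-by-chart strategy is not the right frame for closing it: the paper does not work with generic linear combinations of the pulled-back coordinates $\Phi^*w_i$ at all. Instead it stratifies the global degeneracy locus $B_q=\{\,\tilde\omega:=\Phi^*\omega \text{ has } q+1 \text{ zero eigenvalues}\,\}$, which is shown to be a subvariety (Proposition \ref{subvariety}). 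The key point (Lemma \ref{factor}) is that on any irreducible component $W$ of such a degeneracy locus with $\dim W>q$, the locus where $\tilde\omega|_{W_{\reg}}$ again has $q+1$ zero eigenvalues is a \emph{proper} subvariety of $W_{\reg}$ --- this is exactly where the fibre-dimension hypothesis enters, via a transversal slice $F^{\perp}$ to a fibre on which $\Phi$ is a local isomorphism. Iterating gives a chain $D_k\supseteq\cdots\supseteq D_0$ of analytic sets (Proposition \ref{Key}) such that $\tilde\omega$ restricted to each stratum $W$ of $D_\ell\setminus D_{\ell-1}$ has $\dim W-q$ positive \emph{tangential} eigenvalues; the potential is then built from sums $\sum\rho_j\sum_i|f_{j,i}|^2$ of squared local generators of the ideal sheaves of the strata, which supply $n-\dim W$ positive eigenvalues in the \emph{normal} directions (Propositions \ref{potential}--\ref{normal}), and the two contributions add up to $n-q$. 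The ``higher-order information about $\Phi$'' you correctly sense is needed is encoded in these ideal-sheaf generators and in the induction on the dimension of the degeneracy locus, not in corrective terms to $\sum_j|g_j^\alpha|^2$.

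The converse direction also diverges from the paper and has its own flaw. The paper restricts to a fibre $F$ of dimension $f>q$, notes that $\ddbar\varphi|_{F}$ has a positive eigenvalue on $F_{\reg}$, and then invokes the Andreotti--Grauert theorem to kill $H^{f}(F,\mathcal F)$ for every coherent sheaf; taking $\mathcal F=\mu_*\mathcal{O}_{\tilde F}(K_{\tilde F})$ for a resolution $\mu$ and using Koll\'ar/Grauert--Riemenschneider vanishing plus Serre duality on $\tilde F$ yields the contradiction. Your maximum-principle alternative is plausible in spirit but incomplete as written: after perturbing by $\epsilon\log\|s\|^2$ the maximum points $p_\epsilon$ lie in $\tilde Z\setminus E$ and satisfy $\ddbar\tilde\varphi(p_\epsilon)\le\epsilon\Theta_E$, but as $\epsilon\to0$ the $p_\epsilon$ may accumulate on $E$, where you have no positivity at all --- indeed $\ddbar\tilde\varphi=\mu^*(\Phi^*\omega+\ddbar\varphi)$ on $\tilde F_0$, and at points of $E$ where $d\mu$ drops rank the pullback of a $(n-q)$-positive form to a $(q+1)$-dimensional $\tilde Z$ can have no positive eigenvalue. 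So the limit produces no contradiction in that case. Either adopt the cohomological argument of the paper or supply an additional argument controlling where the $p_\epsilon$ accumulate.
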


Throughout this section, 
let $\Phi : X \longrightarrow Y$ be a holomorphic map  
from $X$ to a compact complex manifold $Y$.
Fix a hermitian form $\omega $ on $Y$.
Set $\widetilde { \omega }:=\Phi ^{*} \omega$, 
which is a semi-positive $(1,1)$-form on $X$.

First we show that condition (A) implies (B). 
For a contradiction, we assume that 
there is a fibre $F$ of the map $\Phi$ with $\dim{F}>q$. 
Then by condition (A), $X$ 
allows a smooth function $\varphi $ 
such that $\widetilde { \omega } + \ddbar \varphi $ is $q$-positive. 
Since $F$ is a fibre, the restriction to $F$ of 
$\widetilde { \omega }=\Phi ^{*} \omega$ is equal to zero. 
It implies that the restriction to $F$ of $\ddbar \varphi$ 
is $q$-positive. 
Even if $F$ has singularities, we can define the $q$-positivity 
(see Definition \ref{sing}).  
Since the dimension of $F$ is strictly larger than $q$, 
the Levi-form of $\varphi |_{F}$ 
has at least $1$ positive eigenvalue.

Since $F$ is compact, the function $\varphi |_{F}$ 
must have the maximum value on $F$. 
Suppose that $p \in F$ attains the maximum value of $\varphi |_{F}$. 
Then the Levi-form of $\varphi |_{F}$ at $p$ has no positive eigenvalues. 
It is a contradiction. 
Hence condition (A) leads to (B). 
\vspace{0.2cm}

On the rest of this section, 
we shall show that condition (B) implies (A). 
From now on, we assume that 
the dimension of every fibre of the map $\Phi $ is at most $q$. 
Then we want to construct a function $\varphi$ with condition (A). 
For this purpose, 
we define the degenerated locus of a hermitian form by the pull-back of 
the map $\Phi $ as follows:
\begin{defi}
The \textit{degenerated locus}  
by the pull-back of $\Phi$ is defined to be 
\begin{equation*}
B_{q} := \{ \ p \in X \ \big|\ \Phi ^{*} \omega \ {\rm has\ at\ least}
\ (q+1)\ {\rm zero}
{\ \rm eigenvalues\ at}\ p  \}.
\end{equation*}
\end{defi}
Since $\widetilde { \omega }=\Phi ^{*} \omega$ is a semi-positive form, $\widetilde { \omega }$ is $q$-positive outside $B_{q}$.
Therefore 
if $B_{q}$ is empty, condition $(A)$ is satisfied by taking $\varphi  :=0$.
Thus it is sufficient to consider the case where $B_{q}$ is not empty.
The following lemma asserts that the degenerated locus is locally 
the zero set of finite holomorphic functions.
\begin{prop} \label{subvariety}
The degenerated locus $B_{q}$ is a closed analytic set on $X$. 
\end{prop}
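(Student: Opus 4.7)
The plan is to identify $B_q$ as a degeneracy (determinantal) locus of the differential of $\Phi$, and then observe that such loci are locally cut out by holomorphic functions.

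First I would establish the key linear-algebraic identification. At any point $p \in X$, the $(1,1)$-form $\tilde{\omega}_p$ is given by $\tilde{\omega}_p(u,\bar{v}) = \omega_{\Phi(p)}(d\Phi_p(u), \overline{d\Phi_p(v)})$ where $d\Phi_p : T_p^{1,0}X \to T_{\Phi(p)}^{1,0}Y$ is the holomorphic tangent map. Since $\omega$ is positive definite on $Y$, the kernel of $\tilde{\omega}_p$ as a hermitian form on $T_p^{1,0}X$ is precisely $\ker d\Phi_p$. Consequently, the number of zero eigenvalues of $\tilde{\omega}_p$ equals $n - \operatorname{rank}_{\mathbb{C}} d\Phi_p$. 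Thus
\begin{equation*}
B_q = \{ p \in X \mid \operatorname{rank}_{\mathbb{C}} d\Phi_p \leq n - q - 1 \}.
\end{equation*}

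Next I would express this locally. Pick any $p \in X$ and choose holomorphic coordinates $(z_1,\dots,z_n)$ near $p$ and $(w_1,\dots,w_m)$ near $\Phi(p)$ on $Y$, so that $\Phi$ is given by holomorphic functions $w_i = \Phi_i(z)$. The complex rank of $d\Phi_p$ is the rank of the Jacobian matrix $J\Phi(p) = (\partial \Phi_i / \partial z_j)_{i,j}$. The condition $\operatorname{rank} J\Phi \leq n-q-1$ is equivalent to the simultaneous vanishing of all $(n-q) \times (n-q)$ minors of $J\Phi$. Each such minor is a holomorphic function in the coordinate neighborhood, so $B_q$ is locally the common zero set of finitely many holomorphic functions. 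This is the defining property of an analytic subvariety, so $B_q$ is a subvariety of $X$.

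There is no serious obstacle here; the only thing to handle carefully is the first step, namely that the rank of the pulled-back hermitian form really counts the complex (not real) rank of the differential. This follows because $\omega$ is a positive-definite hermitian form and the pullback operation on hermitian forms, viewed via $d\Phi_p^* \omega_{\Phi(p)}$, has kernel equal to the kernel of $d\Phi_p$ regarded as a $\mathbb{C}$-linear map. With this identification in place, the rest is a standard appeal to the fact that determinantal loci defined by minors of a holomorphic matrix are analytic.
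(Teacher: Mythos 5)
Your proposal is correct and reaches the same conclusion as the paper's proof: locally $B_q$ is the common zero set of the $(n-q)\times(n-q)$ minors of the Jacobian of $\Phi$, hence a closed analytic subvariety. The only difference is the route to that identification: the paper first replaces $\omega$ by the standard form $\sum_i \ddbar |w_i|^2$ using two-sided bounds and then expands the elementary symmetric functions $\sigma_j$ of the eigenvalues (via a Cauchy--Binet computation) as sums of squared moduli of minors, whereas you identify the kernel of $\Phi^*\omega$ at $p$ directly with $\ker d\Phi_p$; your version is slightly more streamlined and also makes the closedness of $B_q$ (which the paper verifies separately through continuity of the ordered eigenvalues) automatic, since the minors are holomorphic on charts covering all of $X$.
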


\begin{proof}
First we show that $B_{q}$ is a closed set in $X$.
Fix a hermitian form $\overline{\omega} $ on $X$.
We denote by $\lambda _{1} \geq \dots \geq \lambda _{n}\geq 0$,  
the eigenvalues of $ \widetilde { \omega }$ with 
respect to $\overline{\omega}$.
The $j$-th symmetric function in 
$\lambda _{1}, \dots ,\lambda_{n}$ are smooth since 
$\widetilde { \omega }$ and $\overline{\omega}$ is smooth forms.
Therefore 
$\lambda_{i} $ for $0 \leq i \leq n $ is a (well-defined)
continuous function on $X$. 
Now $p$ is contained in $B_{q}$ if and only if $\lambda _{n-q}(p)=0$.
Thus the degenerated locus is closed since $\lambda _{n-q}$ is a 
continuous function.

It remains to show that $B_{q}$ is the zero set of finite holomorphic functions.
Now we take a local coordinate $(w_{1}, \dots, w_{m})$ on $Y$.
Here $m$ denotes the dimension of $Y$.
Then the degenerate locus of $\omega $ coincides with the degenerate locus of $\sum_{i=1}^{m} \ddbar |w_{i}|^{2}$.
In fact, it follows since 
we have 
$$({1}/{C}) \sum_{i=1}^{m} \ddbar |w_{i}|^{2} \leq \omega  \leq  C \sum_{i=1}^{m} \ddbar |w_{i}|^{2} $$ 
for a sufficiently large constant $C>0$.

The holomorphic map $\Phi $ can be locally written as 
$(z_{1}, \dots, z_{n}) \longmapsto 
(f_{1}(z), \dots, f_{m}(z))$
for some holomorphic functions $f_{i}$. 
Here $(z_{1}, \dots, z_{n})$ denotes a local coordinate on $X$. 
Then $B_{q}$ is equal to the locus where the
hermitian form $\ddbar \sum_{i=1}^{m} \ddbar |f_{i}(z)|^{2}$
has at least $(q+1)$ zero eigenvalues.
In general, a semi-positive hermitian form has at least 
$(q+1)$ zero eigenvalues if and only if 
$j$-th symmetric function in the eigenvalues is zero for $n-q \leq j \leq n$.
By a simple computation, $j$-th symmetric function $\sigma _{j}$ in 
$\lambda _{1}, \dots ,\lambda_{n}$
is described as follows:
\begin{align*}
\sigma _{j} &= \sum_{0\leq  i_{1}< \dots < i_{j} \leq n} \det \Big( \big< \mathbb{G}_{i_{a}}, \mathbb{G}_{i_{b}} \big> \Big) _{a,b=1, \dots, n}\\
&=\Big| \sum_{0\leq  i_{1}< \dots <i_{j} \leq n} \sum_{0\leq  \alpha _{1}< \dots <\alpha _{j} \leq m}
\det \Big( g^{\alpha_{a}}_{i_{b}} \Big) _{a,b=1, \dots, j} \Big|^{2}. 
\end{align*}
Here $g^{\alpha }_{i}$ denotes the differential ${\partial f_{\alpha
}}/{\partial z_{i}}$,  
$\mathbb{G}_{i}$ a vector $(g^{1}_{i}, \dots, g^{m}_{i})$, 
and $\big< \cdot, \cdot \big>$ a standard hermitian metric on $\mathbb{C}^{m}$.
Therefore the set defined by $\sigma _{j}=0$ $(n-q \leq j \leq n)$ is 
the zero set of finite holomorphic functions.
\end{proof}

Thanks to Proposition \ref{subvariety}, we can consider the dimension of $B_{q}$.
When the dimension of $B_{q}$ is less than or equal to $q$, 
we can easily see condition (A) 
in Theorem \ref{Main} from Lemma \ref{potential2}.
When the dimension is greater than $q$, 
we factor $B_{q}$ to subvarieties of smaller dimension.  
For this purpose we need Lemma \ref{factor}.
The assumption on fibre dimensions is used in the proof of this lemma.
Later we need to treat an analytic set 
which may not be closed. 
For that reason, Lemma \ref{factor} is formulated
for an analytic set (possibly not closed).
\begin{defi}
A subset $V$ in $X$ is called {\textit {an analytic set}}, if 
for every point $p$ in $V$ 
there exist a small neighborhood of $p$ and finite holomorphic functions on 
the neighborhood 
such that $V$ is the zero set of these functions. 
\end{defi}

Note that an analytic set does \lq \lq not" mean a closed analytic set in this paper. 
For example, the set 
$\big\{ {1}/{n} \in \mathbb{C}\ \big|\
 n \in\mathbb{N} \big\} $ is an analytic set, but 
not a closed analytic set.

\begin{lemm}\label{factor}
Let $W$ be an 
irreducible analytic set $($possibly not closed, singular$)$ on $X$.
Assume that the dimension of $W$ is greater than $q$.

Then the degenerate locus $D$ defined by 
\begin{equation*}
D:= \big\{p \in W_{\reg}\ \big| \ 
{\rm The\ restriction}\ \widetilde { \omega }|_{W_{\reg}}\ 
{\rm has\ at\ least}\ (q+1)\ {\rm zero}\ {\rm eigenvalues\ at}\ p \ \big\}
\end{equation*}
is a closed analytic set on $W_{\reg}$ and properly contained in $W_{\reg}$.
Here $W_{\reg}$ denotes the regular locus of $W$. 
\end{lemm}

\begin{proof}
We have already proved that 
$D$ is a closed analytic set on $W_{\reg}$ 
in the proof of Proposition \ref{subvariety}.
It remains to show that $D$ is a properly contained subset in $W_{\reg}$.
For a contradiction, we assume $D=W_{\reg}$.

We take a point $p$ in $W_{\reg}$
such that 
$\Phi|_{W}:W \longrightarrow Y $ is a smooth morphism at $p$, and 
a fibre $F$ of $\Phi$ containing $p$.
Further we take an open ball $U$ in $W$ with 
a local coordinate $(z_{1}, \dots, z_{r})$ centered at $p$. 
We may assume that 
the first coordinate $(z_{1}, \dots,. z_{s})$ also becomes a local 
coordinate on $F_{\reg}$. 
Here $r$ (resp. $s$) denotes 
the dimension of $W$ (resp. $F$). 

Now  
we consider the restriction $f$ of $\Phi$ 
defined by 
\begin{align*}
 f:= \Phi|_{F^{\perp }} : F^{\perp} \longrightarrow Y,\ 
{\rm where}\ F^{\perp}=\{ \ (0, \dots, 0, z^{s+1}, \dots, z^{r} ) \in U\ \}.
\end{align*}
Then the fibre of $\Phi (p)$ by $f$ is a zero dimensional analytic set.
It implies the Jacobian of $f$ is not identically zero on 
some neighborhood of $p$.
Hence the holomorphic map $f$ gives a local 
biholomorphic at some point.
This means the restriction of $\widetilde { \omega }$ to $F^{\perp}$ has $(n-s)$ positive eigenvalues.
Note that $s$ is less than or equal to $q$.
This is a contradiction to $W_{\reg}=D$.
\end{proof}

Lemma \ref{factor} leads to Proposition \ref{Key}.
Later we shall apply this proposition to $B_{q}$.
The set $B_{q}$ is a closed analytic set.
However we formulate this proposition for an analytic set 
to prove Proposition \ref{Key} with induction on the dimension.
Remark that \lq\lq dimension " in Proposition \ref{Key} does not necessarily
mean the pure dimension .
\begin{prop}\label{Key}
Let $V$ be an analytic set of dimension $k$ $($possibly, not closed, not 
irreducible, singular$)$.
Then there exist sets $D_{\ell}$ $(0\leq \ell \leq k-1)$ 
with following properties: 

$\mathrm{(1)}$\ \ $D_{\ell}$ is an analytic set on $X$.

$\mathrm(2)$\ \ $D_{k}:=V \supseteq D_{k-1}\supseteq \dots 
\supseteq D_{1} \supseteq D_{0}$.

$\mathrm(3)$\ \  $\dim{D_{\ell}}=\ell$ for $\ell=0, 1, 2, \dots, k$.

$(4)$\ \ $D_{\ell} \setminus D_{\ell-1}$ for $\ell=1, 2, \dots, k$ is a disjoint union of 
non-singular analytic sets.

$\mathrm(5)$ For an irreducible component $W$ of 
$D_{\ell} \setminus D_{\ell-1}$ with $\dim{W}>q$, 
the $(1,1)$-form $\widetilde { \omega }|_{W}$ has 
$(\dim{W}-q)$ positive eigenvalues.
\end{prop}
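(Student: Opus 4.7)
The plan is to build the filtration $V = D_k \supseteq D_{k-1} \supseteq \dots \supseteq D_0$ by descending induction on $\ell$, peeling off at each step exactly the locus on which the regular $\ell$-dimensional part of $D_\ell$ carries too few positive eigenvalues of $\tilde\omega$, together with the unavoidable singular and lower-dimensional pieces. Whenever $\ell \leq q$ condition $(5)$ is vacuous at level $\ell$, so no peeling beyond the standard stratification is forced, and the base case $\ell = 0$ is immediate.

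For the inductive step, assume $D_\ell$ has been constructed as an analytic set of dimension at most $\ell$ and decompose it as $D_\ell = D_\ell^{(\ell)} \cup D_\ell^{(<\ell)}$, where $D_\ell^{(\ell)}$ is the union of the irreducible components of dimension exactly $\ell$ and $D_\ell^{(<\ell)}$ is the remainder. For each irreducible component $W$ of $D_\ell^{(\ell)}$ with $\ell > q$, apply Lemma \ref{factor} to $W$ to obtain the degenerate locus
\[
D_W := \{\, p \in W_\reg \mid \tilde\omega \text{ has } (q+1) \text{ zero eigenvalues at } p \text{ as a form on } W_\reg \,\},
\]
which is a proper analytic subvariety of $W_\reg$; when $\ell \leq q$ set $D_W := \emptyset$. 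Then define
\[
D_{\ell-1} := D_\ell^{(<\ell)} \ \cup \ (D_\ell^{(\ell)})_\sing \ \cup \ \bigcup_W D_W.
\]
A finite union of analytic sets is analytic, because $\{f_i = 0\} \cup \{g_j = 0\} = \{f_i g_j = 0\}$, so $D_{\ell-1}$ is analytic of dimension at most $\ell - 1$. Its complement in $D_\ell$ is the disjoint union $\bigsqcup_W (W_\reg \setminus D_W)$; each summand is open in the complex manifold $W_\reg$, hence a non-singular analytic set, and when $\dim W = \ell > q$ the defining property of $D_W$ ensures that $\tilde\omega$ has $(\ell - q)$ positive eigenvalues at every point of $W_\reg \setminus D_W$. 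This delivers conditions $(1)$--$(5)$ at level $\ell$, and the induction continues down to $\ell = 0$.

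The main obstacle I anticipate is the decomposition into irreducible components when $D_\ell$ is open (not a subvariety) and may carry infinitely many components, since the union $\bigcup_W D_W$ still has to be analytic in the sense of the paper. The natural remedy is to work in local charts on the compact manifold $X$, in each of which only finitely many components meet, and to reassemble globally; the resulting union is then locally cut out by finitely many holomorphic functions as required. A related bookkeeping issue is that $\dim D_\ell$ need not equal $\ell$ on the nose--the peeling may drop dimension by more than one at certain steps--but this matches the paper's explicit remark that ``dimension'' in Proposition \ref{Key} is not required to be pure, and trivial levels can be padded to respect the stated indexing.
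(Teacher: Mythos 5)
Your proposal is correct and follows essentially the same route as the paper: both stratify by repeatedly splitting off the singular locus and the irreducible components, and both invoke Lemma \ref{factor} to cut out the degenerate locus of $\tilde\omega$ on each component of dimension greater than $q$, so that what remains satisfies condition (5). The only difference is organizational --- you build the filtration by descending induction on the index $\ell$, while the paper runs an induction on the dimension of the analytic set and recursively applies the full proposition to the singular locus and to each degenerate locus --- and both versions share the same minor loose ends (irreducible decomposition of a non-closed analytic set, and padding levels where the dimension drops by more than one), which you flag explicitly.
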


\begin{proof}
We prove this proposition 
by induction on the dimension $k=\dim{V}$.
When $k$ is zero, we set $D_{0}=V$.
Then the properties in Proposition \ref{Key} hold.
From now on, 
we assume that $k$ is greater than zero.

We consider the decomposition $V=V_{\reg} \cup V_{\sing}$.
Here $V_{\reg}$ (resp. $ V_{\sing}$) denotes 
the regular locus (resp. the singular locus) of $V$.
Note that this decomposition is a disjoint union.
Since the dimension of $V_{\sing}$ is smaller than $k$, we obtain $\widetilde D_{\ell}$ 
\ ($0 \leq \ell \leq \dim{V_{\sing}}$) with the properties 
in Proposition \ref{Key} by the induction hypothesis.

Let $V_{\reg}=\bigcup _{i \in I} V_{i}$ be
the irreducible decomposition of $V_{\reg}$.
For an irreducible component $V_{i}$, we set $D^{i}_{\dim{V_{i}}}:=V_{i}$ if the dimension of $V_{i}$ is less than or equal to $q$.
Otherwise, we investigate the degenerated locus $D^{i}$ of $V_{i}$ defined by 
\begin{equation*}
D^{i}:= \{p \in {(V_{i})}_{\reg} \ \big| \ 
\widetilde { \omega }|_{{(V_{i})}_{\reg}}\ 
{\rm has\ at\ least}\ (q+1)\ {\rm zero}\ {\rm eigenvalues\ at}\ p\  \}.
\end{equation*}
It follows that $D^{i}$ is an analytic set and 
properly contained in $V_{i}$ from Lemma \ref{factor}. 
In particular, the dimension of $D^{i}$ is smaller than $k$.  
Therefore by applying the induction hypothesis to $D^{i}$, 
we obtain $D^{i}_{\ell}$ ($0 \leq \ell \leq \dim{V_{i}}$)
with the properties in Proposition \ref{Key}.

For each $\ell$, we define the set $D_{\ell}$ to be the union of $\widetilde D_{\ell}$ and $D^{i}_{\ell}$ ($i \in I$). 
Then we can easily see that 
$D_{\ell}$ has the properties in Proposition \ref{Key}. 
In fact, it follows since  
$\widetilde D_{\ell}$ and $D^{i}_{\ell}$ 
satisfy the properties in Proposition \ref{Key} and 
$D_{\ell}$ is a disjoint union of them.
\end{proof}

For the proof of Theorem \ref{Main}, 
we apply Proposition \ref{Key} to $B_{q}$. 
Then we obtain $D_{\ell}$ with the properties in Proposition \ref{Key}.
By using these properties, 
we shall construct a function $\varphi$
whose Levi-form has $(n-\dim{W})$ positive eigenvalues in the normal direction of 
an irreducible component $W$ of $D_{\ell} \setminus D_{\ell-1}$.
On the other hand, the restriction $\widetilde{\omega}|_{W}$ has $(\dim{W}-q)$ positive eigenvalues from property (5)
if the dimension of $W$ is greater than $q$.
Thus if there is such function $\varphi$, 
the $(1,1)$-form $\widetilde{\omega} + \ddbar \varphi $ 
has at least $(n-q)$ positive eigenvalues everywhere.
To construct such function, 
we prepare Proposition \ref{potential}, \ref{normal}. 
The proofs of these Propositions are similar to the proof  
of \cite[Theorem 4]{Dem90} (cf. \cite{Siu77}). 

\begin{prop}\label{potential}
For $\ell=0,1,\dots, k$, there exists a function $\varphi _{\ell}
\in C^{2}(X, \mathbb{R})$ 
with the following properties $:$

$\mathrm(1)$\ \ Let $W$ be 
an irreducible component of $D_{\ell} \setminus D_{\ell-1}$. 
Then the Levi form $\ddbar \varphi _{\ell}$ has $(n-\dim{W})$ positive eigenvalues in the normal direction of $W$.

$\mathrm(2)$\ \ The Levi form $\ddbar \varphi _{\ell}$ is semi-positive at every point in $\overline {D_{\ell}}$.
\end{prop}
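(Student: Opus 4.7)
The plan is to construct $\varphi_\ell$ as a finite weighted sum of local potentials, one per connected component $W$ of $D_\ell \setminus D_{\ell-1}$ (finitely many by compactness of $X$). For a component $W$ of codimension $s_W = n - \dim W$, I would cover the closed subvariety $\overline W$ by finitely many coordinate charts $\{U_\alpha\}$ in which $W \cap U_\alpha$ is cut out by $s_W$ holomorphic functions $f_{\alpha,1},\ldots,f_{\alpha,s_W}$ whose differentials are linearly independent along $W$. Setting $\rho_{W,\alpha} := \sum_i |f_{\alpha,i}|^2$ and gluing by a squared partition of unity $\{\eta_\alpha^2\}$ subordinate to $\{U_\alpha\}$ produces a global non-negative smooth function $\rho_W := \sum_\alpha \eta_\alpha^2 \rho_{W,\alpha}$ on $X$ that vanishes to order two on $\overline W$ (the $\eta_\alpha^2$-weighting kills the linear cross terms since each $f_{\alpha,i}$ vanishes on $W$). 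A direct computation then yields
\[
\ddbar \rho_W(p) = \sum_\alpha \eta_\alpha(p)^2 \ddbar \rho_{W,\alpha}(p) \geq 0 \qquad \text{for } p \in \overline W,
\]
and at $p \in W$ this Levi form has exactly $s_W$ positive eigenvalues spanning the normal direction to $W$.

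Next, for each $W$ I would choose a smooth cutoff $\chi_W : X \to [0,1]$ equal to $1$ on a small tubular neighborhood $T_W$ of $\overline W$ and supported in a slightly larger open $T_W'$, arranged so that the transition shell $T_W' \setminus T_W$ meets $\overline{D_\ell}$ trivially. The desired function is then $\varphi_\ell := \sum_W \epsilon_W \chi_W \rho_W$ for suitable small constants $\epsilon_W > 0$. At a point $p \in W_0$, the contribution from $W_0$ itself is $\epsilon_{W_0} \ddbar \rho_{W_0}(p)$ and carries the required $n - \dim W_0$ positive eigenvalues in the normal direction, while contributions from other components $W' \neq W_0$ either vanish identically near $p$ or, if $p \in \overline{W'}$, give semi-positive Levi forms by the previous computation. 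At a general $p \in \overline{D_\ell}$, by the choice of the cutoffs each summand $\epsilon_{W'}\chi_{W'} \rho_{W'}$ is either identically $0$ near $p$ or coincides with $\epsilon_{W'}\rho_{W'}$ near $p$, so its Levi form at $p$ is either $0$ or $\epsilon_{W'} \ddbar \rho_{W'}(p) \geq 0$.

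The hard part will be the design of the cutoffs $\chi_W$: because the closures $\overline W$ and $\overline{W'}$ of distinct components may intersect inside $D_{\ell-1}$, naively disjoint tubes do not exist. I need each transition shell $T_W' \setminus T_W$ to avoid $\overline{D_\ell}$, so that the cross terms $d\chi_W \wedge d^c \rho_W + d\rho_W \wedge d^c \chi_W$ and $\rho_W \ddbar \chi_W$ vanish at every point of $\overline{D_\ell}$ where $\chi_W$ is not locally constant. I plan to arrange all such cutoffs simultaneously by a Urysohn-type construction: pick open neighborhoods of each $\overline W$ inside the relative topology of the closed subvariety $\overline{D_\ell}$, extend them to open sets of $X$, and perturb the resulting boundaries into the complement of $\overline{D_\ell}$ (a proper closed subvariety, hence nowhere dense). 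Once this separation is in place, all the estimates above become local and the required $C^2$-regularity is immediate from the smoothness of the ingredients.
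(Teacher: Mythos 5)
Your construction breaks down at the points where $D_\ell$ fails to be closed, and this is precisely the difficulty the proposition is designed to overcome. The sets $D_\ell$ are analytic sets only in the paper's weak, purely local sense, and neither $\overline{D_\ell}$ nor the closure $\overline{W}$ of a component $W$ of $D_\ell\setminus D_{\ell-1}$ need be a subvariety (the paper's example: the closure of $\{x=e^{y}\}$ in the projective plane). Consequently you cannot cover $\overline{W}$ by charts in which $W$ is cut out by $s_W$ holomorphic functions with independent differentials: such charts exist around points of $W$, but at a point of $\overline{W}\setminus W$ the set $W$ need not even be locally analytic, so $\rho_W$ is undefined there and the semi-positivity of $\ddbar\rho_W$ on $\overline{W}$ --- which you need for property (2), since $\overline{W}\setminus W\subset\overline{D_\ell}$ --- is not established. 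A second, related gap is the cutoff design you flag as ``the hard part'': if the transition shell of $\chi_{W'}$ avoids $\overline{D_\ell}$, then $T_{W'}\cap\overline{D_\ell}$ is relatively clopen in $\overline{D_\ell}$ and hence strictly larger than $\overline{W'}$ whenever $\overline{W'}$ is not a union of connected components of $\overline{D_\ell}$; at a point $p\in (T_{W'}\cap\overline{D_\ell})\setminus\overline{W'}$ your argument appeals to $\ddbar\rho_{W'}(p)\ge0$, which was only proved on $\overline{W'}$ and is false in general off $\overline{W'}$, where the terms $\rho_{W',\alpha}\,\ddbar\eta_\alpha^{2}$ and the cross terms survive. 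Finally, $D_\ell\setminus D_{\ell-1}$ can have infinitely many components (the paper's example $\{1/n\}$ is an analytic set), so ``finitely many by compactness'' is not available and a finite sum need not suffice.

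The paper's proof is built around exactly these obstructions. It first proves a weaker statement (Lemma \ref{potential2}): given any neighborhood $U$ of the bad boundary set $\overline{D_\ell}\setminus D_\ell$, one builds $\varphi_U=\sum_j\rho_j\sum_i|f_{j,i}|^{2}$ using local generators of the ideal sheaf of the honest closed subvariety $B_q\supset\overline{D_\ell}$ on the charts meeting $U$ (these exist everywhere and give semi-positivity on all of $\overline{D_\ell}$, since every offending term carries a factor $f_{j,i}$ or $\overline{f_{j,i}}$ vanishing there) and generators of $\mathcal{I}_{D_\ell}$ on the remaining charts (these give the normal positivity, but only on $W\setminus U$). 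It then takes neighborhoods $U_i$ shrinking to the bad set and puts $\varphi_\ell=\sum_{i}2^{-i}A_i^{-1}\varphi_{U_i}$ with $A_i=\parallel\varphi_{U_i}\parallel_{C^{2}}$; the series converges in the $C^{2}$-topology (which is why the conclusion is only $C^{2}$ --- another sign that a finite smooth construction cannot work), every point of $W$ lies in $W\setminus U_i$ for some $i$ and so acquires the normal positivity from that summand, and all summands are semi-positive on $\overline{D_\ell}$. To salvage your approach you would have to replace $\rho_W$ near $\overline{W}\setminus W$ by potentials built from $\mathcal{I}_{B_q}$ and accept an infinite limiting process of this kind.
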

Here $\overline {D_{\ell}}$ denotes the closure of $D_{\ell}$ in $X$.
Note $\overline {D_{\ell}}$ may not be a closed analytic set. 
For example, the closure of $\{(x,y)\in \mathbb{C}^{2}\ \big|\ x=e^{y}\}$ in the 2-dimensional projective space is not 
a closed analytic set.
In order to show Proposition \ref{potential}, 
we prepare the following lemma.
If $D_{\ell}$ for $\ell=0,1,\dots, k$ is a closed analytic set, 
the statement of this lemma is same as that of Proposition \ref{potential}.
\begin{lemm}\label{potential2}
Let $\ell$ be an integer with $0\leq \ell \leq k$.
For every open neighborhood $U$ of $\overline{ D_{\ell} } \setminus D_{\ell}$, 
there exists a function $\varphi _{U} \in C^{\infty}(X, \mathbb{R})$
with following properties $:$

$\mathrm(1)$\ \ Let $W$ be an irreducible component of 
$D_{\ell} \setminus D_{\ell-1}$.
Then the Levi form $\ddbar \varphi _{U}$ has $(n-\dim{W})$ positive eigenvalues in the normal direction at every point in $W\setminus U$.

$\mathrm(2)$\ \ The Levi form $\ddbar \varphi _{U}$ is semi-positive at every point in $\overline {D_{\ell}}$.
\end{lemm}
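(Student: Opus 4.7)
The plan is to construct $\varphi_U$ by patching local potentials of the form $\sum_j |h_j|^2$ through a smooth partition of unity. The key observation is that such a squared-moduli potential vanishes to second order on the variety it cuts out, which makes all cross terms from the Leibniz rule vanish on $D_\ell$ and yields semi-positivity at every point there.

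First, the hypothesis $\overline{D_\ell}\setminus D_\ell \subset U$ forces $D_\ell\setminus U = \overline{D_\ell}\cap(X\setminus U)$ to be closed in the compact manifold $X$, hence compact. For each $p\in D_\ell\setminus U$ I take a small coordinate neighborhood $V_p$ on which $D_\ell\cap V_p$ is the common zero locus of holomorphic functions $h_1^{p},\dots,h_{m_p}^{p}$, and set $\varphi_p := \sum_{j=1}^{m_p}|h_j^p|^2$. Then $\varphi_p \geq 0$, $\varphi_p$ vanishes on $D_\ell\cap V_p$, and $\ddbar\varphi_p = \sum_j i\,dh_j^p\wedge d\overline{h_j^p}$ is semi-positive on all of $V_p$. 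When $p$ lies on a component $W$ of $D_\ell\setminus D_{\ell-1}$, I shrink $V_p$ so that $D_\ell\cap V_p = W\cap V_p$ is a smooth submanifold of dimension $\dim W$; the differentials $dh_j^p$ then span the conormal bundle of $W$ along $W\cap V_p$, so $\ddbar\varphi_p$ has exactly $n-\dim W$ positive eigenvalues in the normal direction at every point of $W\cap V_p$.

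Next, I extract a finite subcover $\{V_\alpha\}_{\alpha=1}^N$ of the compact set $D_\ell\setminus U$ from this family, adjoin the open set $V_\infty:= X\setminus(D_\ell\setminus U)$, and pick a smooth partition of unity $\{\rho_\alpha\}\cup\{\rho_\infty\}$ subordinate to $\{V_\alpha\}_\alpha\cup\{V_\infty\}$. Setting
$$\varphi_U := \sum_\alpha \rho_\alpha\,\varphi_\alpha \in C^\infty(X,\mathbb R),$$
where each $\rho_\alpha\varphi_\alpha$ is extended by zero outside $V_\alpha$, produces the candidate. At $p\in D_\ell$ with $p\in V_\alpha$, the identities $h_j^\alpha(p)=0$ give $\varphi_\alpha(p)=0$ and $d\varphi_\alpha(p)=\sum_j\bigl(\overline{h_j^\alpha}\,dh_j^\alpha+h_j^\alpha\,d\overline{h_j^\alpha}\bigr)(p)=0$, so every term in
$$\ddbar(\rho_\alpha\varphi_\alpha) = \rho_\alpha\,\ddbar\varphi_\alpha + d\rho_\alpha\wedge d^c\varphi_\alpha + d\varphi_\alpha\wedge d^c\rho_\alpha + \varphi_\alpha\,\ddbar\rho_\alpha$$
except the first vanishes at $p$, leaving $\ddbar\varphi_U(p)=\sum_\alpha\rho_\alpha(p)\,\ddbar\varphi_\alpha(p)\geq 0$. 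This convex combination of semi-positive forms proves property (2) on $D_\ell$, and continuity of $\ddbar\varphi_U$ extends it to $\overline{D_\ell}$. For property (1), at $p\in W\setminus U$ we have $\rho_\infty(p)=0$, so some $\rho_\alpha(p)>0$; the corresponding $\ddbar\varphi_\alpha(p)$ contributes $n-\dim W$ positive eigenvalues in the normal direction of $W$, and the convex combination preserves this.

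The main obstacle is the shrinking step: one must arrange $V_p$ small enough that $D_\ell\cap V_p$ coincides with $W\cap V_p$. Disjointness from other components $W'\neq W$ of $D_\ell\setminus D_{\ell-1}$ follows because $p\in W\setminus U$ cannot lie in $\overline{W'}$ --- otherwise $p$ would be an accumulation point of $W'$, but such accumulation points for $D_\ell\setminus D_{\ell-1}$ lie either in $D_{\ell-1}$ or in $\overline{D_\ell}\setminus D_\ell\subset U$, both excluded. The subtler point is that $D_{\ell-1}$ cannot accumulate onto the smooth point $p$ of $D_\ell$ of top local dimension $\dim W$; this is where the inductive construction in Proposition \ref{Key} is needed, since $D_{\ell-1}$ is built from the singular locus of $V$ together with the degenerate loci of the regular components of $V$, and therefore is closed in the regular part of each such component containing $p$.
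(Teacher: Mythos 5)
Your construction is essentially the paper's proof: local potentials $\sum_i |f_i|^2$ built from the ideal of $D_\ell$, glued by a partition of unity, with the Leibniz cross terms vanishing on $\overline{D_\ell}$ because the $f_i$ do, and the surviving term $\sum_i \partial f_i\wedge\overline{\partial}\,\overline{f_i}$ supplying semi-positivity there and the normal-direction positivity along each $W$. The one point to tighten is that the $h^p_j$ must be taken as local generators of the (reduced) ideal sheaf of $D_\ell$, not merely functions with the right common zero set, since otherwise their differentials need not span the conormal bundle of $W$ (e.g.\ $h=z^2$); with that choice --- which is how the paper phrases it, the paper additionally using generators of $\mathcal{I}_{B_q}$ on the balls meeting $\overline{D_\ell}\setminus D_\ell$ where $\mathcal{I}_{D_\ell}$ is unavailable, whereas you simply contribute the zero potential there --- the argument matches.
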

\begin{proof}
For a given $U$, we take an open covering of $X$ by open balls $U_{j}$ $(j=1,2,\dots, N)$.
Further, we can assume the following properties
for this covering. \vspace{0.1cm}

$\mathrm(a)$\ \ The members $U_{j}$ $(j=1,2,\dots, s)$ cover $D_{\ell} \setminus D_{\ell-1}$.

$\mathrm(b)$\ \ The members $U_{j}$ $(j=1,2,\dots, s)$ are 
contained in $U$. 

$\mathrm(c)$\ \ Every $U_{k}$ which intersects with $U_{j}$ $(j=1,2,\dots, s)$ is also contained in $U$.
\vspace{0.2cm}

We denote by $\mathcal{I}_{B_{q}}$, the ideal sheaf associated to 
a closed analytic set $B_{q}$. 
For every $j=1,2,\dots, s$, we take 
holomorphic functions 
$\{ f_{j,i} \}_{i=1}^{N_{j}}$ on $U_{j}$ such that  
these functions generate the ideal sheaf $\mathcal{I}_{B_{q}}$.
Further, for every $j=s+1,\dots, N$, we 
take holomorphic functions 
$\{ f_{j,i} \}_{i=1}^{N_{j}}$ on $U_{j}$ such that 
these functions generate the ideal sheaf $\mathcal{I}_{D_{\ell}}$. 
Note $D_{\ell}$ is a closed analytic set on $U_{j}$ $(j=s+1,\dots, N)$. 
Therefore we can define the ideal sheaf $\mathcal{I}_{D_{\ell}}$ and 
take its generators.

Let $\big\{ \rho _{j} \big\}_{j=1}^{N}$ be a partition of unity 
associated to the covering of $X$.
Now we define $\varphi _{U}$ to be 
\begin{equation*} 
\varphi _{U}:=\displaystyle \sum_{j=1}^{N} \rho _{j} 
\bigg( \sum_{i=1}^{N_{j}} |f_{j,i}|^{2} \bigg) .
\end{equation*}

Then $\varphi _{U}$ satisfies properties (1), (2).
In fact, an easy computation yields 
\begin{align*}
\sqrt{-1}\partial \overline{\partial} \varphi_{U}= \sum_{j=1}^{N}  
 \sum_{i=1}^{N_{j}}
\Big {\{} 
|f_{j,i}|^{2} \sqrt{-1}\partial \overline{\partial} \rho _{j}
&+ \sqrt{-1} \overline {f_{j,i}} \partial f_{j,i} \wedge \overline {\partial} \rho _{j} \\
&+ \sqrt{-1}{f_{j,i}} \partial \rho _{j} \wedge \overline{\partial} f_{j,i}
+ \sqrt{-1} \rho _{j} \partial f_{j,i} \wedge \overline{\partial} \overline{f_{j,i}} 
\Big {\}}.
\end{align*}
By the definition, $f_{j,i}$ is identically zero on $\overline {D_{\ell}}$.
(Notice that $\overline {D_{\ell}}$ is contained in $B_{q}$.)
Therefore the first three terms are 
zero on $\overline {D_{\ell}}$.
Further the last term is clearly semi-positive.
Therefore property (2) is satisfied.
For every point $p$ in $W\setminus U$, we can take $j_{0}$
such that $U_{j_{0}}$ does not intersect with $\overline{ D_{\ell} } \setminus D_{\ell-1}$ 
and $\rho _{j_{0}}(p) \not = 0$ by the choice of the covering.
Hence the last term has property (2)
since $\partial f_{j_{0},i} \wedge \overline{\partial} \overline{f_{j_{0},i}}$ 
has $(n-\dim{W})$ positive eigenvalues at $p$ in the normal direction 
of $W$.
\end{proof}

Before the proof of Proposition \ref{potential}, 
we recall the definition of a $C^{2}$-norm $\parallel \cdot \parallel_{C^{2}}$ 
on $C^{2}(X, \mathbb{R})$. 
We take an open covering of $X$ by open balls
$U_{j}$ $(j=,1, 2, \dots, N)$ with 
a differential coordinate $(x_{1}^{j}, \dots, x_{2n}^{j})$.
Let $V_{j}$ be a relatively compact set in $U_{j}$ such that 
$\{ V_{j} \}_{j=1}^{N}$ is also an open covering of $X$.
Then the $C^{2}$-norm 
$| \cdot |_{C^{2}}$ 
with respect to the open covering 
is defined to be  
\begin{align*}
| f |_{C^{2}}:= \sum_{j=1}^{N} \sum
_{\alpha, \beta =1 }^{2n, 2n}
\ \sup_{
p \in \overline{V_{j}}}\ \bigg|  
\frac{\partial^{2} f}{\partial x^{j}_{\alpha }\partial x^{j}_{\beta  }}(p) \bigg|
+\sum_{j=1}^{N} \sum
_{\alpha =1 }^{ k}
\ \sup_{
p \in \overline{V_{j}}}\ \bigg|  
\frac{\partial f}{ \partial x^{j}_{\alpha } }(p) \bigg| 
+ \sup _{p \in X} \big| f(p) \big|
\end{align*}
for every $f \in C^{2}(X, \mathbb{R})$. 
Certainly the norm depends on the choice of an open covering. 
However the topology induced by these norms is unique. 
For our purpose, we fix the norm. 
Let us begin the proof of Proposition \ref{potential}. 
\vspace{0.3cm}\\
{\it Proof of Proposition \ref{potential}.}
\ \ Choose a family of open neighborhoods 
$\{ U_{i}\}_{i=1}^{\infty}$ 
of $\overline{ D_{\ell} } \setminus D_{\ell-1}$ such that 
$\bigcap _{i =1}^{\infty}U_{i}= \overline{ D_{\ell} } \setminus D_{\ell-1}$.
For each positive integer $i$, we can take a function $\varphi _{U_{i}} \in C^{\infty}(X, \mathbb{R})$ with 
the properties in Lemma \ref{potential2}.
We set
\begin{equation*}
A_{i}=\parallel \varphi _{U_{i}}\parallel_{C^{2}}.
\end{equation*}
Here $\parallel \cdot \parallel_{C^{2}}$ denotes the fixed $C^{2}$-norm.

Now we define a function $\varphi _{\ell}$ on $X$ 
to be 
\begin{equation*}
\varphi _{\ell}:= \sum_{i =1}^{\infty} 
\dfrac{1}{2^{i}A_{i}}\varphi _{U_{i}}.
\end{equation*}
By the definition of $A_{i}$, 
the sum in the definition uniformly converges with respect to 
the $C^{2}$-norm.
Hence we obtain
\begin{equation*}
\ddbar \varphi _{\ell}= \sum_{i =1} ^{\infty}
\dfrac{1}{2^{i}A_{i}}\ddbar \varphi _{U_{i}}.
\end{equation*}
Properties (1), (2) in Lemma \ref{potential2} and 
the choice of $U_{i}$ lead to 
the properties in Proposition \ref{potential}.
\vspace{-1mm}
\begin{flushright}
$\square$
\end{flushright}

\begin{prop}\label{normal}
For every integer $\ell=0,1,\dots, k$, 
there exists a function $\widetilde {\varphi _{\ell}} \in 
C^{2}(X, \mathbb{R})$
with property $\mathrm{(*)}$.\vspace{0.1cm}\\
\ \ \ \ $\mathrm{(*)}$\ \ 
Let $m$ be an integer with $0 \leq m \leq \ell$ and 
$W$ an irreducible 
component of $D_{m} \setminus D_{m-1}$.
Then the Levi-form $\ddbar \widetilde {\varphi _{\ell}}$ has $(n-\dim{W})$ positive eigenvalues in the normal direction of $W$.

\end{prop}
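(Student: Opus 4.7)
The plan is to induct on $\ell$, using Proposition \ref{potential} as the building block at each step.

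For the base case $\ell = 0$, I set $\tilde{\varphi}_{0} := \varphi_{0}$. By property (3) of Proposition \ref{Key} the components $W$ of $D_{0}$ are zero-dimensional, so $n - \dim W = n$ and property $(*)$ reduces to $\ddbar \varphi_{0}$ being positive definite at every point of $D_{0}$, which is exactly property (1) of Proposition \ref{potential} at level $0$.

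For the inductive step, given $\tilde{\varphi}_{\ell-1}$, I would try the ansatz
\begin{equation*}
\tilde{\varphi}_{\ell} := \tilde{\varphi}_{\ell-1} + C_{\ell}\, \varphi_{\ell}
\end{equation*}
for a positive constant $C_{\ell}$ to be chosen. If $\ell' < \ell$ and $W$ is a non-singular component of $D_{\ell'} \setminus D_{\ell'-1}$, then at any $p \in W$ the inductive hypothesis gives $(n - \dim W)$ positive eigenvalues of $\ddbar \tilde{\varphi}_{\ell-1}$ in the normal direction; since the filtration is increasing, $p \in D_{\ell'} \subseteq D_{\ell} \subseteq \overline{D_{\ell}}$, so property (2) of Proposition \ref{potential} yields that $\ddbar \varphi_{\ell}$ is semi-positive at $p$, and adding $C_{\ell}\, \ddbar \varphi_{\ell}$ only reinforces the positive normal block for any $C_{\ell} > 0$. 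If $\ell' = \ell$, then property (1) of Proposition \ref{potential} gives $(n - \dim W)$ positive normal-direction eigenvalues of $\ddbar \varphi_{\ell}$ at every $p$ in a component $W$ of $D_{\ell} \setminus D_{\ell-1}$; since $X$ is compact and $\tilde{\varphi}_{\ell-1} \in C^{2}(X, \mathbb{R})$, the form $\ddbar \tilde{\varphi}_{\ell-1}$ is bounded in norm, and for $C_{\ell}$ sufficiently large the contribution $C_{\ell}\, \ddbar \varphi_{\ell}$ dominates the possibly indefinite $\ddbar \tilde{\varphi}_{\ell-1}$ in those normal directions.

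The main obstacle is making the choice of $C_{\ell}$ uniform over every $p$ in every component $W$ of $D_{\ell} \setminus D_{\ell-1}$: property (1) of Proposition \ref{potential} only asserts pointwise positivity of the normal eigenvalues of $\ddbar \varphi_{\ell}$, with no a priori lower bound as $p$ approaches the topological boundary of $W$ where lower-dimensional strata accumulate. I expect to resolve this by revisiting the explicit construction of $\varphi_{\ell}$ in Lemma \ref{potential2} and Proposition \ref{potential}: the local model at a regular point $p \in D_{\ell}$ has principal part $\sum_{i} \rho_{j}\, \partial f_{j,i} \wedge \bar\partial \overline{f_{j,i}}$, whose normal eigenvalue is controlled from below by the Jacobian norm of the ideal-sheaf generators $(f_{j,i})$ together with $\rho_{j}(p)$. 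A compactness argument on each stratum, combined with a careful tuning of the weights $1/(2^{i}A_{i})$ in the convergent sum defining $\varphi_{\ell}$, should yield the uniform lower bound needed to extract a working $C_{\ell}$ and close the induction.
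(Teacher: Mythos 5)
Your construction is essentially the paper's: the paper sets $\tilde{\varphi}_{\ell} := \varphi_{\ell} + \varepsilon\, \tilde{\varphi}_{\ell-1}$ for a sufficiently small $\varepsilon>0$, which is just your $\tilde{\varphi}_{\ell-1} + C_{\ell}\varphi_{\ell}$ rescaled by $1/C_{\ell}$, and the two halves of your case analysis (semi-positivity of $\ddbar\varphi_{\ell}$ on $\overline{D_{\ell}}$ preserving the inductive normal positivity when $\ell'<\ell$, domination on the top stratum when $\ell'=\ell$) are exactly what the paper's one-line induction relies on. The uniformity issue you flag --- that property (1) of Proposition \ref{potential} is only pointwise, with no a priori positive lower bound on the normal eigenvalues of $\ddbar\varphi_{\ell}$ as one approaches the boundary of a non-compact stratum $W$ --- is genuine, but the paper does not address it either: it simply asserts that some small $\varepsilon$ works. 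So your proposal matches the paper's argument, and the quantitative refinement you sketch (a lower bound extracted from the generators $f_{j,i}$ and the weights in Lemma \ref{potential2}) is precisely the detail the paper leaves implicit rather than a deviation from its method.
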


Before the proof of Proposition \ref{normal}, 
we confirm that
Proposition \ref{normal} and 
Proposition \ref{Key} complete the proof of Theorem \ref{Main}.
That is, there is a smooth function $\varphi $ on $X$
such that the $(1,1)$-form $\widetilde{\omega} + \ddbar \varphi $ 
is $q$-positive.

First we obtain $\{ D_{\ell} \}_{\ell=0}^{k}$ with properties in 
Proposition \ref{Key}
by applying Proposition \ref{Key} to $B_{q}$. 
Now we take 
$\widetilde {\varphi _{k}}$ with property $(*)$ in Proposition \ref{normal}.
Recall $k$ is the dimension of $B_{q}$. 
Then we shall show that 
$\widetilde{\omega} + \varepsilon \ddbar  \widetilde {\varphi} _{k} $
is $q$-positive for a sufficiently small $\varepsilon >0 $.

Now $\widetilde{\omega}$ is $q$-positive at $x \not \in B_{q}$.
Hence when $x$ is not contained in  $B_{q}$, 
the form $\widetilde{\omega} + \varepsilon \ddbar  \widetilde {\varphi} _{k} $ 
is $q$-positive for a small $\varepsilon >0$. 
When $x$ is contained in $B_{q}$, 
there is a positive integer $\ell$ such that 
$x \in D_{\ell}\setminus D_{\ell-1}$.
(Otherwise $x$ is contained in $D_{0}$.
Then the Levi-form of 
$\widetilde {\varphi} _{k}$ has $n$ positive eigenvalues at $x$.)
For an irreducible component $W$ of $D_{\ell}\setminus D_{\ell-1}$ 
containing $x$, 
the $(1,1)$-form $\widetilde{\omega}|_{W}$ has $(\dim{W}-q)$ positive eigenvalues 
along $W$.
On the other hand, the Levi-form of $\widetilde {\varphi} _{k}$ has 
$(n-\dim{W})$ positive eigenvalues in 
the normal direction of $W$ (property $(*)$ in Proposition \ref{normal}).
Thus $\widetilde{\omega} + \varepsilon \ddbar 
\widetilde {\varphi} _{k}$ has $(n-q)$ positive eigenvalues.
The function $\widetilde {\varphi} _{k}$ may not be smooth.
However we can approximate it with smooth functions without 
the loss of the $q$-positivity since it is $C^{2}$-function 
(for instance, see \cite{Ric68}). 
It completes the proof of Theorem \ref{Main}.
\vspace{2mm}
\\
{\it Proof of Proposition \ref{normal}.}\ \ 
We prove Proposition \ref{normal} by induction on $\ell$.
When $\ell$ is zero, the claim is obvious.
Thus we assume $\ell$ is greater than $0$.
By the induction hypothesis, we obtain a smooth function 
$\widetilde {\varphi} _{\ell-1}$ with property $\mathrm {(*)}$.
Further we take $\varphi _{\ell}$ with the properties 
in Proposition \ref{potential}.
We define a function $\widetilde {\varphi} _{\ell}$ to be 
$\varphi _{\ell}+ \varepsilon   \widetilde {\varphi} _{\ell-1}$.
Then the function satisfies property $\mathrm {(*)}$ for a sufficiently small 
$\varepsilon > 0$.
\begin{flushright}
$\square$
\end{flushright}\vspace{-5mm}\ 
\\
Finally we see that it follows Theorem \ref{Cor1} from Theorem \ref{Main}.
\begin{theo}$(${\rm{=Theorem \ref{Cor1}}}$)$.
Let $L$ be a semi-ample line bundle on 
a compact complex manifold $X$.
Then the following conditions $\mathrm(A)$, $\mathrm(B)$ and $\mathrm(C)$ are equivalent.\\
\ \ \ \ $\mathrm(A)$\ \ $L$ is $q$-positive.\\
\ \ \ \ $\mathrm(B)$\ \ The semi-ample fibration of $L$ has fibre dimensions at most $q$.\\
\ \ \ \ $\mathrm(C)$\ \ $L$ is cohomologically $q$-ample.
\\
Further if $X$ is projective, the conditions above 
are equivalent to condition $\mathrm(D)$.\\
\ \ \ \ $\mathrm(D)$\ \ For every subvariety $Z$
with $\dim{Z}>q$, there exists a curve $C$ on $Z$
such that the degree of $L$ on $C$ is positive.
\end{theo}

\begin{proof}
We can easily confirm  
the equivalence between condition (B) and (C)
from the standard argument of the spectral sequence 
(see \cite[Proposition 1.7]{So78}).

The equivalence between condition (A) and (B) is directly 
followed from Theorem \ref{Main}.
Indeed, we apply Theorem \ref{Main} to the semi-ample fibration
$\Phi:= \Phi_{|L^{\otimes m}|}: X \longrightarrow \mathbb{P}^{N_{m}}$ and 
$\omega :=\omega_{FS} $, where $\omega_{FS}$ is the Fubini-Study form.
Then $\Phi ^{*}\omega_{FS}+\ddbar \varphi $ is $q$-positive
for some function $\varphi$ by Theorem \ref{Main}.
The form represents the Chern class of $L^{\otimes m}$.
Therefore condition (B) implies (A).
Conversely, if $L$ is $q$-positive, 
it is cohomologically $q$-ample by the Andreotti-Grauert theorem.
Therefore the fibre dimension of the semi-ample fibration must be
at most $q$.

It remains to show the equivalence between condition (B) and (D).
In this step, we use the assumption that $X$ is projective. 
Assume that the fibre dimension of the semi-ample fibration is at most $q$.
Then for any subvariety $Z$ with $\dim{Z} > q$, 
we can take a curve on $Z$ which is not contracted by $\Phi$.
Then the degree of $L$ on the curve is positive by the projection formula.
Conversely, if there exists a fibre $F$ with $\dim{F} > q$, 
the degree on every curve in $F$ is zero.
\end{proof}

%%%%%%%%%%%%%%%%%%%%%%%%%%%%%%%%%%%%%%%%%%%%%%%%%%%%%%%%%%%%%%%%%%%%%%%%%%%%%%%%%%%
\section{Zariski-Fujita type theorem for big line bundles}
In this section, we prove Theorem \ref{non-ample}. 
Theorem \ref{non-ample} says that, 
a big line bundle is $q$-positive if and only if 
the restriction to the non-ample locus of the line bundle is $q$-positive.  
See \cite{ELMNP} or \cite[Section 3.5]{Bou04} for the definition and properties of the non-ample locus.
(Sometimes, the non-ample locus is called the augmented base locus
or the non-K\"ahler locus.)

\begin{theo}\label{non-ample}
Let $L$ be a big line bundle on a smooth projective
variety $X$. 
Assume that the restriction of $L$ to the non-ample locus $\mathbb{B}_{+}(L)$ 
is $q$-positive. 
Then $L$ is $q$-positive on $X$.

\end{theo}
Recall that 0-positive is positive 
in the usual sense (that is, ample).
Hence Theorem \ref{non-ample} claims that $L$ is ample on $X$
if the restriction to the non-ample locus of $L$ is ample. 
It can be seen as the parallel of the the Zariski-Fujita theorem
(see \cite{Zar89} and \cite{Fuj83} for the Zariski-Fujita theorem).

Throughout this section, we denote by $X$ a compact 
K\"ahler manifold and by $L$ a line bundle on $X$.
Moreover fix a smooth hermitian metric $h$ of $L$.
The Chern curvature $\sqrt{-1}\Theta_{h}(L)$ associated to $h$   
represents the first Chern class $c_{1}(L)$.

In this section, 
we treat a closed analytic set which may have the singularities on $X$.
For this purpose, we extend the $q$-positivity concept from a manifold to an analytic space.
Note the following definition
does not depend on the choice of a hermitian metric $h$ of $L$.

\begin{defi}\label{sing}
Let $V$ be a closed analytic set on $X$.
The restriction $L|_{V}$ to $V$ of $L$ is {\textit{q-positive}} 
if there exists a real-valued continuous function $\varphi $ on $V$ 
with the following condition:

For every point $p$ on $V$, there exist a neighborhood $U$ of $p$ on $X$ and 
a $C^{2}$-function $\widetilde {\varphi} $ on $U$
such that $\widetilde {\varphi} |_{V\cap U}= \varphi $ and 
the $(1,1)$-form 
$\sqrt{-1}\Theta_{h}(L) + \ddbar \widetilde {\varphi}$ has at least
$(n-q)$-positive eigenvalues on $U$.
\end{defi}

For the proof of Theorem \ref{non-ample}, 
we prepare the following lemma.

\begin{lemm}\label{supplement}
Let $V$ be a closed analytic set $($possibly not irreducible$)$ on $X$.
If the restriction $L|_{V}$ to $V$ of $L$ is $q$-positive, 
then $X$ allows a function $\varphi _{V} \in
C^{\infty}(X, \mathbb{R})$ on $X$
such that $\sqrt{-1}\Theta_{h}(L)  + \ddbar \varphi _{V}$ has 
at least $(n-q)$ positive eigenvalues 
on some neighborhood of $V$.
\end{lemm}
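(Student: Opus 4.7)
The plan is to globalize the local $C^2$ extensions guaranteed by Definition \ref{sing} through a partition of unity, and then to correct the resulting function by an ``ideal-vanishing'' term of the kind built in Lemma \ref{potential2}. Since $V$ is closed in the compact manifold $X$, Definition \ref{sing} supplies a finite open cover $V \subset \bigcup_{j=1}^{N} U_j$ and $C^2$-functions $\tilde{\varphi}_j$ on $U_j$ with $\tilde{\varphi}_j|_{V\cap U_j} = \varphi$ and $\sqrt{-1}\Theta_h(L) + \ddbar \tilde{\varphi}_j$ having at least $(n-q)$ positive eigenvalues on $U_j$.

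Next, I would pick a partition of unity $\{\rho_j\}_{j=1}^{N}$ subordinate to $\{U_j\}$ with $\sum_j \rho_j \equiv 1$ on an open neighborhood of $V$, and set $\psi := \sum_j \rho_j \tilde{\varphi}_j \in C^2(X, \mathbb{R})$, which satisfies $\psi|_V = \varphi$. Expanding $\ddbar \psi$ at a point $p \in V_{\reg}$ and using $d(\sum_j \rho_j)=0$ and $\ddbar(\sum_j \rho_j)=0$ near $V$, the term $\sum_j \tilde{\varphi}_j \ddbar \rho_j$ evaluates to $\varphi(p)\,\ddbar(\sum_j \rho_j)(p) = 0$, while the mixed terms can be rewritten as $\sum_j d\rho_j \wedge d^c(\tilde{\varphi}_j - \tilde{\varphi}_{j_0})$ plus its conjugate for any fixed $j_0$. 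Since the differences $\tilde{\varphi}_j - \tilde{\varphi}_{j_0}$ vanish identically on $V$, their $(1,0)$- and $(0,1)$-parts annihilate every vector in $T_p^{1,0}V$, so these mixed terms evaluate to zero on pairs of tangent vectors to $V_{\reg}$. Consequently, the restriction of $\sqrt{-1}\Theta_h(L) + \ddbar \psi$ to $T_p V_{\reg}$ coincides with that of $\sqrt{-1}\Theta_h(L) + \ddbar \tilde{\varphi}_{j_0}$, inheriting the tangential $(n-q)$-positivity.

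To upgrade this to genuine $(n-q)$-positivity on an open neighborhood of $V$ in $X$, I would add a correction $\lambda \chi$ where $\chi := \sum_j \rho_j \sum_i |f_{j,i}|^2$ is built from local generators $f_{j,i}$ of the ideal sheaf of $V$ on $U_j$, exactly as in Lemma \ref{potential2}. The function $\chi$ vanishes on $V$, and $\ddbar \chi$ is semi-positive near $V$ and strictly positive in the conormal directions along $V_{\reg}$; for $\lambda$ large enough, the tangential positivity of $\sqrt{-1}\Theta_h(L) + \ddbar \psi$ combines with the normal positivity of $\lambda \ddbar \chi$ to give $(n-q)$ positive eigenvalues for $\sqrt{-1}\Theta_h(L) + \ddbar(\psi + \lambda \chi)$ on a neighborhood of $V_{\reg}$. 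A standard $C^2$-approximation then produces a smooth $\varphi_V \in C^\infty(X, \mathbb{R})$ with the same property, since $(n-q)$-positivity is an open condition. The hard part will be extending positivity across the singular locus $V_{\sing}$, where the tangential cancellation fails; this should be addressed by induction on $\dim V$, noting that $L|_{V_{\sing}}$ inherits $q$-positivity from the same $\varphi$ and local ambient extensions, and then fusing the neighborhoods produced at each step into a single smooth correction.
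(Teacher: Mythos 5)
Your proposal is correct and follows essentially the same route as the paper: the paper's (very terse) proof likewise takes a smooth global extension of the local potentials from Definition \ref{sing}, observes that the resulting curvature form restricted to $(V_i)_{\reg}$ retains $(\dim V_i - q)$ positive eigenvalues, and then ``revises positivity in the normal direction'' by the ideal-sheaf construction of Lemma \ref{potential2} together with the stratification/induction argument of Propositions \ref{Key}--\ref{normal} --- exactly the gluing, tangential-cancellation, and $\lambda\chi$-correction-plus-induction steps you describe.
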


\begin{proof}

We take a smooth extension $\widetilde{\varphi }$ to $X$ 
of the potential function in Definition \ref{sing}.
Let $V= \bigcup _{i \in I} V_{i} $ be the irreducible decomposition.
From the construction of $\widetilde{\varphi }$, the restriction 
to ${(V_{i})}_{\reg}$ of 
$\sqrt{-1}\Theta_{h}(L)  + \ddbar \widetilde{\varphi }$ has 
at least $(\dim{V_{i}}-q)$ positive eigenvalues.
Then we can revise the positivity in the normal direction of $V$ 
 with the same argument as Proposition \ref{normal}.
It leads to Lemma \ref{supplement}.
\end{proof}
\vspace{0.2cm}\hspace{-0.6cm}
{\it Proof of Theorem \ref{non-ample}.}\hspace{0.1cm}
By the property of the non-ample locus 
(see \cite[Theorem 3.17]{Bou04}), 
there exists a $d$-closed current $T$ on $X$ with following properties:
\vspace{0.1cm}\\
\ \ \ \ (1)\ \ $T$ represents the first Chern class of $L$.
\\
\ \ \ \ (2)\ \ $T$ has analytic singularities along the non-ample locus
of $L$. 
\\
\ \ \ \ (3)\ \ For some hermitian form $\omega $ on $X$, 
the inequality $T \geq \omega $ holds as a $(1,1)$-current.
\vspace{0.1cm}\\
From property (1), we obtain an $L^{1}$-function 
$\varphi_{s}  $ on $X$ with 
$T = \sqrt{-1}\Theta_{h}(L) + \ddbar \varphi_{s} $.
On the other hand, by applying Lemma \ref{supplement} to the non-ample locus, 
we can obtain a function 
$\varphi _{\mathbb{B}_{+}} \in C^{\infty}(X, \mathbb{R})$ 
such that $\sqrt{-1}\Theta_{h}(L) + \ddbar \varphi _{\mathbb{B}_{+}}$ 
is $q$-positive
on some neighborhood $U$ on the non-ample locus.

Then we shall see that $\varphi_{s}$ and $ \varphi _{\mathbb{B}}$ can be glued.
For a real number $C>0$, we 
define the function $\psi_{C}$ to be  
$\psi_{C}:=\max \{ \varphi _{\mathbb{B}_{+}}-C,\varphi_{s} \}$.
For a large $C>0$, 
the function $\varphi _{\mathbb{B}}-C$ is smaller than $\varphi _{s}$
outside some neighborhood $U_{C}$ of the non-ample locus.
By taking a sufficiently large $C>0$, we may assume that 
$U_{C}$ is relatively compact in $U$. 

On the other hand, 
the function $\varphi_{s}$ has a polar set along the non-ample locus.
That is, $\varphi_{s} (x) = -\infty$ for any point $x$ on the non-ample 
locus.
Hence 
there exists a neighborhood $V_{C}$ of the non-ample locus 
such that $\varphi _{s}$ is smaller than $\varphi _{\mathbb{B}_{+}}-C$
even if $C$ is large.
We may assume $V_{C}$ is relatively compact in $U_{C}$.

Outside $U_{C}$, the $(1,1)$-form
\begin{equation*}
\sqrt{-1}\Theta_{h}(L) +\ddbar \psi_{C} = \sqrt{-1}\Theta_{h}(L)+ \ddbar \varphi _{s}
\end{equation*}
has $n$ positive eigenvalues.
On the other hand, inside $V_{C}$ the $(1,1)$-form
\begin{equation*}
\sqrt{-1}\Theta_{h}(L) +\ddbar \psi_{C} = \sqrt{-1}\Theta_{h}(L)+ \ddbar \varphi _{\mathbb{B}_{+}}
\end{equation*}
has $(n-q)$-positive eigenvalues.
In order to investigate the positivity on 
$\overline{U_{C}}\setminus V_{C}$, we prepare the following lemma.

\begin{lemm}\label{max}
Let $\gamma $ be a smooth $d$-closed $(1,1)$-form on $X$ 
and an $L^{1}$-function $\varphi_{i}$ $($for $i=1,2)$ 
with $\ddbar \varphi _{i}  \geq  \gamma$.  
Then the function $\max(\varphi_{1}, \varphi_{2})$ 
also satisfies 
$\ddbar \max(\varphi_{1}, \varphi_{2}) \geq \gamma$.
\end{lemm}
\begin{proof}
Notice that the conclusion is a local property.
We can locally take a smooth potential function 
of $\gamma $ since $\gamma $ is a $d$-closed form.
Thus we have $\gamma = \ddbar \psi$ 
for some function $\psi $.
By the assumption, $\ddbar( \varphi_{i} -\psi)$ is a positive 
current.
Therefore the Levi form of  
\begin{equation*}
\max( \varphi_{1}, \psi + \varphi_{2} - \psi )= 
 \max(\varphi_{1}, \varphi_{2}) - \psi 
\end{equation*}
is also a positive current.
It implies that $\ddbar 
\max(\varphi_{1}, \varphi_{2} )\geq \gamma$.

\end{proof}

Since $U_{C}$ is relatively compact in $U$, 
$\sqrt{-1}\Theta_{h}(L)+ \ddbar \varphi _{\mathbb{B}_{+}}$
is $q$-positive on $\overline{U_{C}}$. 
Certainly $\sqrt{-1}\Theta_{h}(L)+ \ddbar \varphi _{s}$ is $q$-positive 
($0$-positive) on $\overline{U_{C}} \setminus  V_{C}$. 
Therefore it follows from the lemma above that    
$\sqrt{-1}\Theta_{h}(L) +\ddbar \psi_{C}$ is $q$-positive 
on $\overline{U_{C}}\setminus V_{C}$.
The function $ \psi_{C}$ may not be smooth.
However we can approximate it with smooth functions
without the loss of the $q$-positivity since $\psi_{C}$ is 
continuous.
Therefore $L$ is $q$-positive on $X$.
\vspace{-10mm}
\\
\begin{flushright}
$\square$
\end{flushright}

When the dimension of the non-ample locus is smaller $q$, 
the assumption in Theorem \ref{non-ample} is automatically satisfied.
Thus we have the following corollary.
\begin{cor}\label{small-dim}
Assume the dimension of the non-ample locus of 
$L$ is less than or equal to $q$. 
Then $L$ is $q$-positive. 
\end{cor}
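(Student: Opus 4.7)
The strategy is to deduce Corollary \ref{small-dim} directly from Theorem \ref{non-ample} by checking that the dimension hypothesis makes condition $(*)$ automatic. Since $q \leq n-1$, the assumption $\dim \mathbb{B}_{+}(L) \leq q < n$ forces $\mathbb{B}_{+}(L) \subsetneq X$, which by the defining property of the non-ample locus means that $L$ is big; Theorem \ref{non-ample} is therefore applicable once we exhibit the required $q$-positivity of $L|_{\mathbb{B}_{+}(L)}$.

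Setting $V := \mathbb{B}_{+}(L)$, I must produce a continuous $\varphi : V \to \mathbb{R}$ together with local $C^{2}$ extensions $\tilde\varphi$ on $X$-neighborhoods of each $p \in V$ for which $\sqrt{-1}\Theta_{h}(L) + \ddbar \tilde\varphi$ has at least $n-q$ positive eigenvalues. The decisive observation is that every smooth stratum $W$ of $V$ has dimension $\leq \dim V \leq q$, so that $n - \dim W \geq n - q$; hence it suffices to produce positivity purely in the \emph{normal} direction of each stratum, and no tangential contribution (of the kind supplied by $\tilde\omega|_{W}$ in Proposition \ref{normal}) is required.

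I would then imitate the stratification-and-induction argument of Propositions \ref{Key}, \ref{potential}, and \ref{normal}: decompose $V$ by iterating the regular/singular splitting to obtain a nested chain $D_{k} \supseteq \cdots \supseteq D_{0}$ of analytic sets with smooth strata; locally take holomorphic generators $f_{1}, \ldots, f_{N}$ of the ideal sheaf of each $D_{\ell}$ and form $\sum |f_{i}|^{2}$, whose Levi form has the full normal rank $n - \ell$ at regular points of the $\ell$-dimensional stratum; assemble these via a partition of unity as in Lemma \ref{potential2} and glue across strata inductively with small coefficients as in Proposition \ref{normal}, scaling by large constants so that $\ddbar \tilde\varphi$ dominates the smooth form $\sqrt{-1}\Theta_{h}(L)$ in those normal directions and yields $n - \dim W \geq n - q$ positive eigenvalues on an $X$-neighborhood of each stratum. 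Defining $\varphi := \tilde\varphi|_{V}$ then supplies the continuous potential demanded by Definition \ref{sing}, so $L|_{V}$ is $q$-positive and Theorem \ref{non-ample} concludes the proof.

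The main obstacle is the behavior at singular points of $V$, where the Zariski tangent space may strictly contain the tangent space of $V$ and a one-shot extension using the ideal sheaf of $V$ itself will not supply enough positive eigenvalues. This is precisely the difficulty already resolved by the inductive construction of Proposition \ref{normal}, which adds small multiples of the potentials built on each $D_{\ell} \setminus D_{\ell-1}$ so that each stratum acquires the required normal positivity without spoiling what has been achieved on lower-dimensional strata; in the present setting that induction is strictly easier, because only the normal contribution is needed and the tangential piece can be ignored.
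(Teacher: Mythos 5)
Your proposal is correct and follows essentially the same route as the paper, which simply observes that the hypothesis $\dim \mathbb{B}_{+}(L) \leq q$ makes condition $(*)$ of Theorem \ref{non-ample} automatic and then invokes that theorem. You supply the details the paper leaves implicit (bigness of $L$, and the verification of $(*)$ via the stratification machinery of Lemma \ref{potential2} and Propositions \ref{Key}--\ref{normal}, noting that only normal positivity is needed since every stratum has dimension at most $q$), which is exactly the mechanism the paper's Lemma \ref{supplement} also relies on.
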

Under the assumption in Corollary \ref{small-dim}, 
$L$ is cohomologically $q$-ample 
(cf. \cite{Kur10}, \cite[Theorem 1.6]{Mat10}). 
Corollary \ref{small-dim} claims that 
the $q$-positivity has the same property.

\section{Relations with the holomorphic Morse inequality}
\label{hol}

In his paper \cite{Dem10}, Demailly proved the converse of the holomorphic Morse inequality on a surface. 
This result has the similarity to the converse of the Andreotti-Grauert theorem.
In this section, we explain the difference between his result and 
the result (Theorem \ref{AG1}) in this paper. 
First we recall the holomorphic Morse inequality which 
is closely related with the Andreotti-Grauert vanishing theorem.  
\begin{defi}
Let $L$ be a line bundle on a compact complex manifold $X$.
Then the asymptotic $q$-cohomology of $L$ is defined to be 
\begin{equation*}
\hat{h} ^{i} (L) : = \limsup_{m \to \infty} 
\frac{n!}{m^{n}} h^{i}(X, \mathcal{O}_{X}( L^{\otimes m} )).
\end{equation*}
\end{defi}

In his paper \cite{Dem85}, Demailly gave a relation between 
the dimension of the asymptotic  cohomology of a line bundle 
and certain Monge-Amp\`{e}re integrals of the curvature. 
It is so-called Demailly's holomorphic Morse inequality. 
For simplicity, we assume that $X$ is projective.
\begin{theo} $($\cite{Dem85}$)$.
For every holomorphic line bundle $L$ on a projective manifold $X$, 
we has the $($weak$)$ Morse inequality \\
\begin{equation*}
 \hat{h} ^{i} (L) \leq  \inf
 _{h\text{:hermitian metric on L}} \int_{X(h, i).} 
 (\sqrt{-1}\Theta_{h}(L))^{n} (-1)^{i}, 
\end{equation*}
where $h$ runs through smooth hermitian metrics on $L$, and  
${X(h, i)}$ is the set defined by 
\begin{equation*}
X(h, i) : = \big\{ x \in X \ |\ \sqrt{-1}\Theta_{h}(L)
\text{ has a signature } (n-i, i)\ \text{at}\ x.
 \big\}.
\end{equation*}
\end{theo}
The holomorphic Morse inequality would be seen as 
an asymptotic version of the Andreotti-Grauert vanishing theorem. 
In his paper \cite{Dem10-2}, Demailly conjectured that 
the inequality would actually be an equality. 
The conjecture has the similarity to Problem \ref{main}. 
In \cite{Dem10}, he showed the converse of holomorphic Morse inequality holds 
in the following case:\\
\ \ \ (1) The case where $X$ is projective surface. \\
\ \ \ (2) The case where $X$ is an arbitrary projective manifold and $i=0$. \\
Result (2) can be seen as a \lq\lq partial" converse of the Andreotti-Grauert theorem. 
However, Result (2) seems not to lead to Theorem \ref{AG1}.

%%%%%%%%%%%%%%%%%%%%%%%%%%%%%%%%%%%%%%%%%%%%%%%%%%%%%%%%%%%%%%%%%%%%%%%%%%%%%%%%%%%

%%%%%%%%%%%%%%%%%%%%%%%%%%%%%%%%%%%%%%%%%%%%%%%%%%%%%%%%%%%%%%%%%%%%%%%%%%%%%%%%%%%
\end{document}